\theoremstyle{plain}
\newtheorem{theorem}{Theorem}
\newtheorem{lemma}{Lemma}
\newtheorem{proposition}{Proposition}
\newtheorem{corollary}{Corollary}
\newtheorem{definition}{Definition}
\newtheorem{example}{ Example}
\numberwithin{equation}{section}
 \numberwithin{theorem}{section}
\numberwithin{proposition}{section}
\numberwithin{lemma}{section}
\numberwithin{corollary}{section}
\numberwithin{definition}{section}
 \numberwithin{remark}{section}
\numberwithin{example}{section}
\numberwithin{figure}{section}
\def\C{{\mathbb C}}
\def\J{{\mathbb J}}
\def\P{{\mathbb P}}
\def\R{{\mathbb R}}
\def\Z{{\mathbb Z}}
\def\bL{{\mathbb L}}
\def\Bi{{\mathcal B}}
\newcommand{\Sing}{{\rm Sing}}
\author{ Susumu TANAB\'E, Abuzer G\"UND\"UZ}
\title[Curves approaching asymptotic critical value set ]{On  curves  approaching  asymptotic critical value set of a polynomial map }
\begin{document}
\maketitle

\begin{center}

\begin{minipage}[t]{12.2cm}



\vspace{0.5cm}

{\sc Abstract -} {\em We present an  effective method to construct curves  approaching the asymptotic critical value set of a polynomial map.  For this purpose, we propose a way to construct rational curves with parametric representation.  In this manner, we show that the asymptotic critical value set contains the critical value of an polynomial associated to so called bad face of the Newton polyhedron. In the case where the polynomial map is non-degenerate at infinity, we also give a superset of the asymptotic critical value set.
 Our main technical tool is the toric geometry that has been introduced into the study of this question by A.N\'emethi and A.Zaharia.
 }

\end{minipage}
\end{center}

\vspace{1.0cm}

\section{Introduction
\label{sec:introduction}}
The bifurcation locus of a polynomial map $f\colon \C^n \to \C$  is the smallest subset $\Bi(f) \subset \C$ such that $f$ is a locally trivial $C^\infty$-fibration over $\C\setminus \Bi(f)$.
It is known that $\Bi(f)$ is the union of  the set of critical values $f(\Sing f)$
  and the set of bifurcation values at infinity $\Bi_\infty(f)$ which may be non-empty and disjoint from $f(\Sing f)$ even in very simple examples. Finding the bifurcation locus in the cases $n>2$ is a difficult task and it still remains to be an unreached ideal. Nevertheless,  one can obtain approximations by supersets of $\Bi(f)$ from exploiting asymptotical regularity conditions.


\vspace{1.0cm}
\footnoterule
{
\footnotesize{ Key words: Newton polyhedron, regularity at infinity, critical values}

\footnotesize{AMS Subject Classification: 14Q20, 58K05, 32S05.}

{\footnotesize Acknolwedgements: ST  has been partially supported by Max Planck Institut f\"ur Mathematik, Centre National de la Recherche Scientifique-The Scientific and Technological Research Council of Turkey  bilateral project 113F007 "Topologie des singularit\'es de la surface complexe," Universit\'e Lille 1.  ST and AG are partially supported by the Scientific and Technological Research Council of Turkey 1001 Grant No. 116F130 "Period integrals associated to algebraic varieties."}}

\newpage
   Jelonek and Kurdyka ~\cite{JK0, JK} established  an algorithm for finding the set of asymptotic critical values ${\mathcal K}_\infty (f).$  It is known that ${\mathcal K}_\infty (f)$ is finite and includes $\Bi_\infty(f)$ in this case. Under the condition that the projective closure of the  generic fibre of $f$ in $\P^n$ has only isolated singularities, Parusi\'nski \cite{Parus} proved that $\Bi(f) = {\mathcal K}_\infty (f) \cup f(\Sing f).$
A precedent work  \cite{DTT} established a method to detect the bifurcation set in an efficient way.
It gave an answer to a question raised in \cite{JK} and  \cite{DT}
about the detection of  the bifurcation locus by rational curve with parametric representation.

\vspace{0.5pc}

More concretely, for a real polynomial  $f: \R^n \to \R$ of degree $\leq d$,
authors of  \cite{DTT} consider a real  rational curve $X(t)$,  $lim_{t \rightarrow 0} \| X(t)\| \rightarrow \infty$,  with  parametric representation with length
$ (d+1) d^{n-1}+1$ to attain the asymptotic critical value $lim_{t \rightarrow 0} f(X(t)) \in {\mathcal K}_\infty (f)$.

In our present note,  we propose a method to construct a rational curve with drastically reduced number of terms  present in
its parametric representation (Theorem \ref{theorem1}). Futher in this article,  we shall use the terminology  "parametric length of  a curve"
  to denote this number.
Thus, in our Example \ref{example41}, the parametric length of  a real rational curve has been reduced to $4$ in comparison with $3601$ proposed in
\cite{DTT}. 

Starting from Lemma \ref{lemma:transposeM}, we take into account the condition $(\mu)$  on the vector $q$ \eqref{Gamma} that is always satisfied for a proper choice of the toric data $W$ (\ref{WM}) (Lemma 	\ref{lemma:musaisfied}).  We follow \cite{NZ, Z96}
as for the use of toric geometry in the investigation of the asymptotically non-regular values of $f.$
Our main Theorem \ref{theorem1} states the inclusion  into ${\mathcal K}_\infty (f)$ of  critical values of certain polynomial $f^W_\gamma$, with possibly non-isolated singularities, constructed on a "bad face" $\gamma$ of the Newton polyhedron of $f$
under the condition $(\mu).$ Thus, Corollary \ref{cor:inclusion} establishes an inclusion relation
\begin{equation}\label{def11}
\bigcup_{\gamma:{ bad\; face}} f_\gamma ( {\rm Sing}\; f_\gamma  \cap (\C^\ast)^{dim\;\gamma})\subset  {\mathcal K}_\infty (f)
\end{equation}
that is valid even in the case of non-isolated singularities.

For the case where $f$ is non-degenerate at infinity
 this gives an approximation of ${\mathcal K}_\infty (f)$ formulated in Corollary \ref{cor:inclusionupper } that determines a superset of ${\mathcal K}_\infty (f)$
\begin{equation}\label{def12}
{\mathcal K}_\infty (f) \subset  \bigcup_{\gamma: bad\; face} f_\gamma ( {\rm Sing}\; f_\gamma  \cap (\C^\ast)^{dim\;\gamma})\cup \{0\}.
\end{equation}
In this case, our Corollary \ref{cor:Kinftynumber} gives a refined upper bound estimate of the cardinality  $\# {\mathcal K}_\infty (f)$ in terms of volumes of polyhedra explicitly obtained from  bad faces.
We remark that this estimation
gives a better approximation than \cite[Theorem 2.2, 2.3]{JK0}
under conditions imposed in
Corollary \ref{cor:Kinftynumber} if $dim \;\gamma < n-1.$

In \cite{Ta}, it is shown that the left hand side set of  the relation \eqref{def11}  is contained in the bifurcation set $\Bi(f) $
for $\gamma$ relatively simple bad face (see Definition \ref{def:nonrelativelysimple}) and  $f^W_\gamma$ with isolated singularities on $(\C^\ast)^{dim\;\gamma}.$
As it is known $\Bi(f) \subset {\mathcal K}_\infty (f)$ from \cite{JK0}, our Corollary \ref{cor:inclusionupper }
 represents a new result only for
$\gamma$ non-relatively simple bad face,  if the condition of the isolated singularities at infinity is assumed.

In Section \ref{sec:nonrelativelysimple},  we examine an example of  a polynomial in 5 variables with non-relatively simple bad face.
Even in this situation, we can construct a curve approaching an asymptotic critical value of $f.$
This gives an example to Corollary \ref{cor:inclusionupper }
  that is not covered by \cite{Ta}.
As \cite{Ta} imposes the condition of isolated singularity at infinity, it does not concern our Example \ref{example41} treating the non-isolated singularities.

It is worthy noticing that M.Ishikawa \cite{Ishikawa} established a precise description of $\Bi(f)$ analogous to  \cite[Proposition 6]{NZ} for any polynomial map in two variables, i.e. possibly with non-isolated singularities at infinity.

Our method heavily relies on various kinds of Newton polyhedra  constructed in two different chart systems. The core technique is explained in Proposition
\ref{lemma:C} where the key data like the integer vector $q \in \Z^n$ and the integer $\rho >0$ are introduced. The vector $q \in \Z^n$ is used to calculate the number
$L_0$ (\ref{L0}) that determines the parametric length together with $\rho$  (\ref{rho}).


The first author expresses gratefulness to Mihai Tib{\u{a}}r for having drawn his attention to the question of asymptotic critical values of a polynomial map and for useful discussions. He  thanks Kiyoshi Takeuchi for comments and remarks.

\section{Approach with unimodular subdivision of the dual cone
\label{sec:unimodular}}

To fix notations and fundamental notions, we follow \cite{NZ, Z96}.

Let us consider a polynomial

\begin{equation}	
 f(x) = \sum_{\alpha} a_\alpha x^\alpha
\label{fx}
\end{equation}	
with $f(0)=0$ where the multi-index $\alpha$ runs within the set of integer points $ supp(f)  =\{ \alpha \in (\Z_{\geq 0})^n ; a_\alpha \not =0\}.$  We introduce a convex polyhedron of finite volume $\Delta(f)$ defined as the convex hull of $supp(f)$ in $\R^n$ that is assumed to be of the maximal dimension i.e. $dim\; \Delta(f)=n.$
We denote the convex hull of $ supp(f) \cup \{0\}$ in $\R^n$ by $\widetilde{\Gamma}_-(f).$


\begin{definition}		
For $a\in (\R^n)^*$ we denote by $\Delta^a$ a face of $\widetilde{\Gamma}_-(f)$ determined by the condition $\left<a,y\right>\leq\left<a,x\right>$ for every pair $x\in \widetilde{\Gamma}_-(f) $ and $y\in \Delta^a.$
For a face $\gamma\subset \Delta(f)$ of the Newton polyhedron of $f$  (\ref{fx}), we define $f_\gamma(x) = \sum_{\alpha \in \gamma} a_\alpha x^\alpha$
\end{definition}

\begin{definition}		
For a set $\Lambda \subset (\R_{\geq 0})^n$ we denote by $C(\Lambda)=\{tv; t\in \R_{\geq 0}, \;v\in \Lambda\}$ the cone with the base $\Lambda.$
\label{def:cone}
\end{definition}

\begin{definition}		
Let $K$ be an unimodular simplicial subdivision of $(\widetilde{\Gamma}_-(f))^*$
where $(\widetilde{\Gamma}_-(f))^\ast$ is the dual to $\widetilde{\Gamma}_-(f).$
$$(\widetilde{\Gamma}_-(f))^\ast= \{ a\in (\R^n)^*;\; \left<a,x\right>\geq 0, \;\forall x\in \widetilde{\Gamma}_-(f)) \}$$
$$=\{a \in   (\R^n)^*;\; \left<a,x\right>\geq 0, \;\forall x\in C(\widetilde{\Gamma}_-(f))) \}.$$
\end{definition}

\begin{definition}(\cite{NZ})
We call a face $\gamma \subset \Delta(f)$ {\it bad}, if it satisfies the following two properties.

(i) The affine subspace of dimension = $dim\; \gamma$ spanned by $\gamma$ contains the origin.

 (ii) ($\pm$ condition for the bad face) There exists an hyperplane $H \subset \R^n$ such that  $\gamma = H \cap \Delta(f)$
 defined by an equation $\sum_{j=1}^n p_j x_j=0$ be provided with a pair of indices $i \not = j $ satisfying $p_i p_j <0.$
\label{definition:badface}
\end{definition}

If $a_1, \cdots, a_k$ is an unimodular basis of a $k-$ dimensional cone $\sigma \in K$ i.e. $\sigma=\Sigma_{i=1}^k t_i a_i, t_i \geq 0$, we can choose ${m}_1, \cdots, {m}_n  \in \R^n$
a basis of the dual cone $\sigma^\ast=\{x\in \R^n; \left<x,a\right>\geq 0,\forall a\in \sigma \}$
such that  $\left<a_i,{m}_j\right>=\delta_{ij}, i \in [1; k], j \in [1; n]$ where $\delta_{ij}$ is Kronecker Delta.
From here on we shall  use the notation $i \in [r_1; r_2] \Leftrightarrow i \in \{r_1, \cdots,  r_2\} $ for two integers $r_1 <r_2.$\\
We can further extend the basis $a_1, \cdots, a_k$ to an $n-$dimensional basis
$a_1, \cdots, a_n$ with the aid of supplementary vectors $a_{k+1}, \cdots, a_n$ in such a way that $\mid det (a_1, \cdots, a_n)\mid=1.$
We pose $\sigma^*=\{\sum_{i=1}^n\lambda_im_i; \lambda_j\geq0,  j \in [1;k]\}$ and $V_{\sigma^*}=\{ \sum_{j=k+1}^n m_j\lambda_{j}; \lambda_j \in \R , j\in [k+1; n]\}$ $ \subset \sigma^*.$

Assume that $ \gamma$ is a bad face and a $n-$dimensional cone $\sigma \in K \subset (\widetilde{\Gamma}_-(f))^\ast  \subset (\R^n)^* $ satisfying
\begin{equation}	
\gamma \subset \sigma^*=\{x\in\R^n; \left<\alpha,x\right> \geq 0, \forall \alpha \in\sigma \}
\label{sigmastar}
\end{equation}	
 with a basis $(a_1,\ldots,a_k)$ such that
$$\gamma=\{v \in  \Delta(f); \left<a_i,v\right>=0, \;i=1,\ldots,k \}. $$
Such a basis exists by virtue of Definition ~\ref{definition:badface} $(ii).$


\begin{definition}	
Let $\sigma\in K$ be an unimodular simplicial cone with $dim(\sigma)=k.$
   An algebraic torus of dimension $n-k $ associated to the cone $\sigma$ can be defined as
$$\Phi[\sigma]=(\C^*)^n/ \{(t^{b_1},\ldots,t^{b_n}); t\in (\C^*)^k, (b_1,\ldots,b_n)\in \sigma   \}.$$
We also consider a disjoint union of tori given by
$M_{\overline{\sigma}}=\cup_{\sigma^{'}\subset \sigma} \Phi[\sigma^{'}]\cong \C^k \times (\C^*)^{n-k}\;\ni (u_1,\ldots,u_k,u_{k+1},\ldots,u_n)$ with $\overline{\sigma}=\cup_{\sigma^{'}\subset \sigma}\sigma^{'}$ where  $\sigma^{'}$ run over all subcones of $\sigma.$
\label{Phisigma}
\end{definition}

\begin{definition}
For an integer vector $\alpha =(\alpha_1, \cdots, \alpha_n)\in \Z^n,$ we denote by $\alpha'=(\alpha_1, \cdots, \alpha_k) \in \Z^{k}$ and $\alpha''=(\alpha_{k+1}, \cdots, \alpha_n) \in \Z^{n-k}$ its respective components.
In a parallel way, we introduce two sets of variables $u' \in \C^{k}$ (called affine)  and $u'' \in (\C^\ast )^{n-k}$  (called toric), $u=(u',u'')  \in \C^n_k$ where
$$\C^n_k =\C^k\times (\C^\ast)^{n-k} .$$


\label{definition:u}
\end{definition}

We introduce the following unimodular matrices $M$ and $W$ with integer entries (i.e. complementary vectors $a_{k+1}, \cdots, a_n  \in \sigma^\bot$ )associated to a cone $\sigma \in K.$
\begin{equation}
W=({a_1}^T,\ldots,{a_n}^T)=
\left( \begin{array}{c}
w_1  \\
w_2      \\
\vdots     \\
w_n      \\
\end{array} \right),
W^{-1}=M=
\left( \begin{array}{c}
m_1  \\
m_2      \\
\vdots     \\
m_n
\end{array} \right)
\label{WM}
\end{equation}
where $(m_1,\ldots,m_n)$ basis of $\sigma^*$ and $\sigma^*=\Sigma_{i=1}^k \R_{\geq 0} m_i +\Sigma_{j=k+1}^n \R m_j .$
Especially, we shall take the cone $\sigma$ so that
$ \{ m_1, \cdots,  m_n \} \subset (\R^\ast)^n.$ This choice is possible thanks to the conditions of Definition \ref{definition:badface}.
Further we use the following notation also (see Lemma \ref{lemma:transposeM})
\begin{equation}
M^T=
\left( \begin{array}{c}
\mu_1  \\
\mu_2      \\
\vdots     \\
\mu_n
\end{array} \right).
\label{mu}
\end{equation}

Under the change of variables
\begin{equation}
(x_1, \cdots, x_n)  = (u^{w_1}, \cdots, u^{w_n})
\label{xu}
\end{equation}
we consider
\begin{equation}
f^W(u)  =\sum_{\alpha \in supp(f)} a_\alpha u^{\alpha\cdot W}
\label{fWu}
\end{equation}
 where $W$ as in \eqref{WM}.
For $\alpha \in \Z^n,$ we represent $\alpha. W \in \Z^n$  the integer vector with the aid of its components
$$ \alpha . W = (\lambda_1(\alpha), \cdots, \lambda_n(\alpha)).$$

Due to the choice of the basis $a_1, \cdots, a_k$ and the definition of the cone $\sigma^\ast,$
we have the following.
\begin{lemma}	
For general  $v\in \Delta (f)$ that is not necessarily located on the bad face $\gamma,$ we have
$\lambda_1(v),\ldots,\lambda_k(v) \geq 0.$\\
\end{lemma}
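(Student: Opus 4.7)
The plan is to unwind the definition of $\lambda_j(v)$ and then to exploit the containment $\sigma \subset (\widetilde{\Gamma}_-(f))^\ast$ that follows from $\sigma \in K$.

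First I would identify each $\lambda_j(v)$ with a duality pairing. Because the $j$-th column of $W$ in (\ref{WM}) is $a_j^T$, the relation $v\cdot W = (\lambda_1(v),\ldots,\lambda_n(v))$ unpacks column by column into
\[
\lambda_j(v) \;=\; \left<a_j, v\right>, \qquad j \in [1;n].
\]
This reduces the lemma to a non-negativity statement about the pairings $\left<a_j, v\right>$ for $j \in [1;k]$.

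Next I would invoke the position of $\sigma$ inside the dual cone. As $K$ is a unimodular simplicial subdivision of $(\widetilde{\Gamma}_-(f))^\ast$, one has $\sigma \subset (\widetilde{\Gamma}_-(f))^\ast$, and in particular its generators $a_1, \ldots, a_k$ satisfy $\left<a_j, x\right> \geq 0$ for every $x \in \widetilde{\Gamma}_-(f)$ by the very definition of the dual. Using the evident inclusion $\Delta(f) \subset \widetilde{\Gamma}_-(f)$ (the former is the convex hull of $supp(f)$, the latter that of $supp(f)\cup\{0\}$), this immediately yields $\lambda_j(v) = \left<a_j, v\right> \geq 0$ for every $v \in \Delta(f)$ and every $j \in [1;k]$.

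There is no genuine obstacle in this argument; it reduces to unwinding the notation in (\ref{WM}), (\ref{xu}), (\ref{fWu}) and to the defining property of the dual cone. I would only note that the analogous positivity \emph{cannot} be expected for $\lambda_{k+1}(v), \ldots, \lambda_n(v)$: the vectors $a_{k+1}, \ldots, a_n$ are introduced solely to complete $a_1, \ldots, a_k$ to a unimodular integer basis and need not lie in $\sigma$, so their pairings with points of $\Delta(f)$ can have either sign. Crucially, no hypothesis that $v$ lies on the bad face $\gamma$ is needed, which is exactly the strength asserted by the lemma.
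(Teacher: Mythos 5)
Your proof is correct and matches the paper's (essentially one-line) justification: since $a_1,\ldots,a_k$ generate a cone of the subdivision $K$ of $(\widetilde{\Gamma}_-(f))^\ast$, each pairs non-negatively with every point of $\widetilde{\Gamma}_-(f) \supset \Delta(f)$, and $\lambda_j(v)=\left<a_j,v\right>$. Your closing remark about $a_{k+1},\ldots,a_n$ also agrees with the paper's observation that only $\lambda_{k+1}(v),\ldots,\lambda_n(v)$ may be negative.
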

Thus, only $\lambda_{k+1}(v),\ldots,\lambda_n(v)$ may be negative for $v\in \Delta(f)$ in general. For $v\in \gamma$ bad face, by the choice of  $\sigma$, we have $\lambda_j(v)=\left<a_j,v\right> \geq 0$ for $\forall \;j\in [1;n],$ $\forall  v \in \gamma.$  Because of \eqref{sigmastar},	 $f_\gamma^W(0,u'')$ is a polynomial in $u''$ variables. In other words,
$\lambda_1(v)=\left<a_1,v\right>=0,\ldots, \lambda_k(v)=\left<a_k,v\right>=0, \lambda_{k+1}(v)=\left<a_{k+1},v\right>\geq0 ,\ldots, \lambda_n(v)=\left<a_n,v\right>\geq0, \;\forall v\in \gamma.$

The expression $ f^W(u)  $ is a Laurent polynomial with possibly negative power exponents in toric variables $ u''$,  but being restricted to affine variables $u'$,  it gives a polynomial in $u'=(u_1,\ldots,u_k).$
We denote
$$ \vartheta_{u} f^W(u)= (\vartheta_{u_1} f^W(u), \cdots ,  \vartheta_{u_n} f^W(u)), $$
with $  \vartheta_{u_j} = u_j \frac{\partial}{\partial u_j},$ $ j \in [1;n].$
For a critical point $u^\ast= (0, u''_\ast) \in{\C^n}_k$ such that $  \vartheta_{u} f_\gamma^W(u^\ast) =0$,
we introduce the notation $u' = (u_1, \cdots, u_k),$ $U''=(U_{k+1}, \cdots, U_{n})=(u_{k+1}-u_{k+1}^\ast, \cdots, u_{n}-u_{n}^\ast)$ and consider a local expansion of the Laurent polynomial
$f^W(u)$ at $u=u^\ast= (0, u''_\ast) \in\C^n_k$ as follows

\begin{equation}
  f^W(u) = \sum_{\beta \in supp_{u^\ast}(f^W)} a^{\ast}_\beta (u-u^*)^\beta =  \sum_{\beta \in supp_{u^\ast}(f^W)} a^{\ast}_\beta u'^{\beta'}U''^{\beta''},
\label{fWuU}
\end{equation}
for $ supp_{u^\ast}(f^W) := \{ \beta \in \Z^n; a^{\ast}_\beta \not =0 \}.$
Here the expression corresponding to the term $\alpha.W \in (\Z_{\geq 0}^k \setminus \{0\}) \times \Z_{< 0}^{n-k}$
in (\ref{fWu}) shall produce a series in (\ref{fWuU}) with  $ ( {\beta'},{\beta''}) \in (\Z_{\geq 0}^k \setminus \{0\}) \times (\Z_{\geq 0})^{n-k}$ according to the rule
\begin{equation}
 \frac{1}{u_j} = \frac{1}{u_j^\ast} \sum_{\ell \geq 0} (- \frac{U_j}{u_j^\ast})^\ell.
\label{uU}
\end{equation}

\begin{definition}	We consider a  convex polyhedron $\Delta_{u^\ast} (f^W)= $ convex hull of  $ supp_{u^\ast}(f^W)$
that is a $n-$dimensional polyhedron due to the condition $dim\;\Delta(f)=n.$
For every facet (= $n-1$ dimensional face) $\Gamma$ of   $\Delta_{u^\ast}(f^W),$ we can find an integer vector
$q\in \Z^n$ and an integer $r$ such that $ \left<q,{\alpha}\right> \geq r $, $\forall {\alpha} \in \Delta_{u^\ast}(f^W)$,
and $\left<q,{\beta}\right> = r $, $\forall {\beta} \in \Gamma $.
 Here the components of $q\in \Z^n$  can be chosen coprime.
\label{definition:q}
\end{definition}

For the above  mentioned cone $\sigma,$ consider the decomposition.\\
\begin{equation}
f^W(u)=\tilde f^W(u) + R(u)
\label{ftilde}
\end{equation}
with $$\tilde f^W(u)= \sum_{\alpha \in supp(f) \cap (\Z_{\geq 0})^n M }a_\alpha u^{\alpha W}$$ while $R(u)$ corresponds to terms with exponent vectors $\alpha W \notin (\Z_{\geq 0})^n$ such that  negative powers appear, i.e. some of $\lambda_{k+1}(\alpha),\ldots, \lambda_n(\alpha)$ are strictly  negative and $\lambda_1(\alpha),\ldots, \lambda_k(\alpha)\geq 0.$
We shall note that if some of $\lambda_{k+j} (\alpha) $ is strictly negative for $\alpha \in \Delta(f),$ then  $\lambda_i(\alpha)$ for some $i \in [1;k]$ must be strictly positive. If all  $\lambda_i(\alpha)=0$ for all $i \in [1;k]$ (we remark that for every
$\alpha \in \Delta(f)$,  $\lambda_i(\alpha)\geq 0$ for all $i \in [1;n]$), it means that $\alpha$ is located on the bad face $\gamma,$ thus $\lambda_{k+j} (\alpha) \geq 0$ for all  $j \in [1; n-k].$
In other words,

\begin{lemma}
The Laurent polynomial $f^W(0, u'') = f^W_\gamma(u) =
\sum_{\alpha \in \gamma} a_\alpha u^{\alpha . W}$ is a polynomial (with positive power terms)  in $u''$ variables.
\label{lemma:polynomial}
\end{lemma}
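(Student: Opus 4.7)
The plan is to unwind the definition of $f^W(u)$ in \eqref{fWu} term-by-term, record the sign behavior of the components $\lambda_i(\alpha)=\langle a_i,\alpha\rangle$ of $\alpha\cdot W$ for the two classes of exponents (those in the bad face $\gamma$ and those off it), and then observe that setting $u'=0$ exactly isolates the $\gamma$-contribution. Since all of the ingredients have already been assembled in the paragraphs preceding the lemma, the argument is essentially bookkeeping, and I expect no serious obstacle; the only thing to verify with care is that for $\alpha \in \gamma$ the remaining components $\lambda_{k+1}(\alpha),\ldots,\lambda_n(\alpha)$ are truly non-negative, which uses the hypothesis $\sigma \in K \subset (\widetilde{\Gamma}_-(f))^{\ast}$ applied to the full basis $a_1,\dots,a_n$.

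First, I would fix $\alpha\in \operatorname{supp}(f)\subset\Delta(f)$ and write $u^{\alpha\cdot W}=(u')^{(\lambda_1(\alpha),\ldots,\lambda_k(\alpha))}(u'')^{(\lambda_{k+1}(\alpha),\ldots,\lambda_n(\alpha))}$. By the preceding lemma (applied to the first $k$ basis vectors) one has $\lambda_i(\alpha)\ge 0$ for all $i\in[1;k]$. Hence the monomial $u^{\alpha\cdot W}$ becomes identically zero upon the substitution $u'=0$ unless $\lambda_1(\alpha)=\cdots=\lambda_k(\alpha)=0$. By the definition of the bad face, $\gamma=\{v\in\Delta(f):\langle a_i,v\rangle=0,\ i=1,\ldots,k\}$, the vanishing of $\lambda_1(\alpha),\ldots,\lambda_k(\alpha)$ is equivalent to $\alpha\in\gamma$. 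Consequently
\[
f^W(0,u'') \;=\; \sum_{\alpha\in\gamma} a_\alpha\, (u'')^{(\lambda_{k+1}(\alpha),\ldots,\lambda_n(\alpha))} \;=\; f^W_\gamma(u),
\]
which is the first assertion.

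Next I would check that the exponents occurring in $f^W(0,u'')$ are non-negative, so that $f^W_\gamma$ is an honest polynomial in $u''$ (not merely a Laurent polynomial). Because the cone $\sigma\in K$ is contained in $(\widetilde{\Gamma}_-(f))^\ast$, every basis vector $a_j$ of $\sigma$ satisfies $\langle a_j,v\rangle\ge 0$ for every $v\in\widetilde{\Gamma}_-(f)$, and in particular for every $v\in\gamma\subset\Delta(f)$. Thus for each $\alpha\in\gamma$ and each $j\in[k+1;n]$ we have $\lambda_j(\alpha)=\langle a_j,\alpha\rangle\ge 0$. This is exactly the assertion already recorded by the author just above the lemma statement; combining it with the identification $f^W(0,u'')=f^W_\gamma(u)$ from the previous paragraph completes the proof.
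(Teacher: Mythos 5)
Your first paragraph is fine and is exactly the bookkeeping the paper performs in the text preceding the lemma: by the earlier lemma $\lambda_1(\alpha),\dots,\lambda_k(\alpha)\ge 0$ for every $\alpha\in\operatorname{supp}(f)$, so substituting $u'=0$ (with $u''\in(\C^\ast)^{n-k}$) annihilates every monomial having some $\lambda_i(\alpha)>0$, $i\in[1;k]$, and what survives is precisely $\sum_{\alpha\in\gamma}a_\alpha (u'')^{(\lambda_{k+1}(\alpha),\dots,\lambda_n(\alpha))}$. You also correctly single out the delicate point, namely that $\lambda_{k+1}(\alpha),\dots,\lambda_n(\alpha)\ge 0$ for $\alpha\in\gamma$.

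The gap is in how you justify that delicate point. You claim that every basis vector $a_j$, $j\in[1;n]$, of $\sigma$ lies in $(\widetilde{\Gamma}_-(f))^{\ast}$ and hence pairs non-negatively with all of $\widetilde{\Gamma}_-(f)$. In the intended setup only $a_1,\dots,a_k$ generate a cone of $K$ inside the dual cone; the vectors $a_{k+1},\dots,a_n$ are merely a unimodular completion of this basis and in general do \emph{not} belong to $(\widetilde{\Gamma}_-(f))^{\ast}$. If your claim were true, then $\langle a_j,\alpha\rangle\ge 0$ would hold for every $\alpha\in\operatorname{supp}(f)$ and every $j$, so $f^W$ itself would have no negative exponents at all; this contradicts the decomposition \eqref{ftilde}, whose remainder $R(u)$ consists exactly of terms with some $\lambda_{k+j}(\alpha)<0$, contradicts the remark after the first lemma that $\lambda_{k+1}(v),\dots,\lambda_n(v)$ may be negative on $\Delta(f)$, and contradicts Example \ref{example41}, where $f^W$ contains the term $u_1u_2/u_3$ (there $a_3=(-1,0,2)$ pairs to $-1$ with $v_4=(3,1,1)\in\operatorname{supp}(f)$). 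The non-negativity of the toric exponents on $\gamma$ is therefore not automatic from $K$ subdividing the dual cone: it is exactly the condition \eqref{sigmastar}, imposed as part of the choice of $\sigma$, i.e.\ of the completion $a_{k+1},\dots,a_n$ (equivalently, the cone spanned by $m_{k+1},\dots,m_n$ must contain $C(\gamma)$), which is what the paper means by ``by the choice of $\sigma$''. Your argument should appeal to that hypothesis (or verify it for the chosen $W$) rather than to membership of all the $a_j$ in the dual cone; as written, the step fails for a general unimodular completion.
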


For the bad face with $codim\; \gamma = k$ in $\Delta(f),$ one can find at least $k$ different points in $(\sigma^*  \setminus V_{\sigma^*}) \cap supp(f)$ and  thus one can choose at least $k$ linearly independent  points  $ \alpha = \sum_{i=1}^k \lambda_i m_i \in (\sigma^*  \setminus V_{\sigma^*}) \cap supp(f)$  with $\lambda_i \geq 0$ for $i \in [1;k]$ that satisfies  $\alpha.W \in (\Z_{\geq 0}^k \setminus \{0\}) \times \Z^{n-k}.$  There may be contributions from the expansion at $u=u^\ast$ of the rational function $R(u)$ obtained after the principle (\ref{uU}),  but this situation does not influence on the existence of $k$   linearly independent points in  $supp_{u^\ast}(f^W) \cap \R^k$. The fact that $f_\gamma^W(0,u'')   =
\sum_{\alpha \in \gamma} a_\alpha u^{\alpha . W}$ is a polynomial depending on all $u''$ variables on $\C^{n-k}$  and the bad face $\gamma$ contains at least $n-k+1$ points that span a $(n-k)$ dimensional linear subspace of $\R^n$ yields that
\begin{equation}
 dim\; \left (\Delta_{u^\ast}(f^W) \cap \{(0, \alpha''); \alpha''\in (\R_{\geq 0})^{n-k}\} \right) \geq n-k-1.
\label{(3)}
\end{equation}

\section{Curve construction by means of Newton polyhedron
\label{sec:Newtonpoly}}

This section is the core part of this note. First of all we introduce a polyhedron
$\Delta^\ast$ defined as a convex hull of $\cup_{i=1}^n \Delta_{u^\ast}(\left< \mu_i, \vartheta_{u} f^W(u)\right>). $
Here  the polyhedron $\Delta_{u^\ast}(\left< \mu_i, \vartheta_{u} f^W(u)\right>) $
is defined as a convex hull of $supp_{u^\ast}(\left< \mu_i, \vartheta_{u} f^W(u)\right>)$
obtained after the expansion as in (\ref{fWuU}).
\begin{proposition}
Assume that  $\vartheta_{u} f_\gamma^W(u^\ast) =0$ for $ u^\ast =(0, u_\ast'') \in \C^n_k.$
 There is a facet $\Gamma $ of the polyhedron $\Delta^\ast$ satisfying $dim \;(\Gamma \cap \R^{n-k} ) = n-k-1$ defined by a
vector $q \in \Z^n$ such that
\begin{equation}
 \Gamma = \{\beta \in  \Delta^\ast ;  \left<\beta, q \right> \leq  \left<\tilde \beta, q \right> \; \rm{for\; every}\; \tilde \beta \in   \Delta^\ast \}.
\label{Gamma}
\end{equation}
In other words, for any $\beta \in  \Delta^\ast$, the inequality $ \left<\beta, q \right> \leq  \left<\tilde \beta, q \right>$
holds with every $\tilde \beta \in   \Delta_{u^\ast} (\left< \mu_i, \vartheta_{u} f^W(u)\right>),$ $ i \in [1;n].$
We shall further denote by $\rho$ the following integer
\begin{equation}
\rho = min_{\tilde \alpha \in \Delta^\ast} \left<\tilde \alpha,q\right>
\label{rho}
\end{equation}
that is equal to $ \left<\alpha, q\right>$ for $\alpha \in \Gamma.$
\label{lemma:C}
\end{proposition}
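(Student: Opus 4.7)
The plan is to construct the pair $(q,\Gamma)$ in two stages: first identify a facet $\Gamma''$ of dimension $n-k-1$ inside the toric slice $\Delta^\ast\cap\R^{n-k}$, and then lift its outer normal to a vector $q\in\Z^n$ whose $q$-minimizing face on $\Delta^\ast$ is a facet $\Gamma$ containing $\Gamma''$.

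First I would analyse the toric slice $\Delta^\ast\cap\R^{n-k}$, with $\R^{n-k}$ identified with $\{(0,\beta''):\beta''\in\R^{n-k}\}$ as in (\ref{(3)}). The key observation is that the restriction of $\langle\mu_i,\vartheta_u f^W\rangle$ to $u'=0$ kills all contributions $\vartheta_{u_j}f^W|_{u'=0}$ for $j\in[1;k]$, because $f^W$ is polynomial in $u'$ and the factor $u_j$ vanishes; what remains is $\sum_{j>k}\mu_{i,j}\vartheta_{u_j}f^W_\gamma$. The hypothesis $\vartheta_u f^W_\gamma(u^\ast)=0$ forces the constant coefficient in the local expansion at $u''_\ast$ of every $\vartheta_{u_j}f^W_\gamma$ to vanish, so the local supports lie in $(\Z_{\geq 0})^{n-k}\setminus\{0\}$. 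Combined with Lemma~\ref{lemma:polynomial} and the dimension argument supporting (\ref{(3)}) (the bad face $\gamma$ has $\geq n-k+1$ affinely independent points, so the Newton polytope of $f^W_\gamma$ in $\R^{n-k}$ is full-dimensional), this identifies $\Delta^\ast\cap\R^{n-k}$ as a full $(n-k)$-dimensional polyhedron in $(\R_{\geq 0})^{n-k}$ that avoids the origin.

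Next I would pick a coprime positive integer vector $q''\in\Z_{>0}^{n-k}$ in generic position so that $\langle q'',\cdot\rangle$ attains its minimum on $\Delta^\ast\cap\R^{n-k}$ along a single facet $\Gamma''$ of dimension $n-k-1$; this is routine for a full-dimensional polyhedron in the positive orthant bounded away from the origin. I then extend $q''$ to $q=(q_1,\ldots,q_k,q''_1,\ldots,q''_{n-k})\in\Z^n$ by choosing positive integer $q_1,\ldots,q_k$ in the relative interior of a normal ray of $\Delta^\ast$ corresponding to a facet $\Gamma$ containing $\Gamma''$. Such a facet exists because $\Gamma''$ has dimension $n-k-1<n-1$ and therefore sits inside at least one facet of $\Delta^\ast$; a suitable coprime integer representative $q$ with all components positive can be extracted by rational approximation within the cone of admissible normals. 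Setting $\rho=\min_{\tilde\alpha\in\Delta^\ast}\langle\tilde\alpha,q\rangle$ then yields (\ref{rho}).

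The identity $\dim(\Gamma\cap\R^{n-k})=n-k-1$ should then follow from two bounds: the inclusion $\Gamma''\subseteq\Gamma\cap\R^{n-k}$ supplies the lower bound, while $q''\neq 0$ makes the supporting hyperplane $\{\langle q,\cdot\rangle=\rho\}$ meet $\R^{n-k}$ in an $(n-k-1)$-dimensional affine subspace, bounding $\Gamma\cap\R^{n-k}$ from above. I expect the main obstacle to lie in the extension step: one must guarantee that some facet of $\Delta^\ast$ containing $\Gamma''$ admits a normal ray with an integer representative $q$ having the prescribed projection $q''$ and all positive components. This requires a careful normal-fan analysis near the face $\Gamma''$, including ruling out degenerate configurations (several facets collapsing along $\Gamma''$, or normal rays escaping the positive orthant).
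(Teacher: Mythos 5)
Your slice analysis (the points of $\Delta^\ast$ lying in $\R^{n-k}$ come only from the local expansion of $\vartheta_{u''}f^W_\gamma$ at $u''_\ast$, and the vanishing of the gradient removes the constant term, so the slice sits in $(\Z_{\geq 0})^{n-k}\setminus\{0\}$) matches the paper's observation in step (c) of its proof. But the construction breaks down exactly at the step you flag as "the main obstacle," and that obstacle is not a technicality one can defer: it is the whole content of the proposition. A facet of $\Delta^\ast$ containing your slice-facet $\Gamma''$ has its inner normal determined up to positive scaling by $\Delta^\ast$ itself; you cannot "choose positive integers $q_1,\dots,q_k$" nor prescribe that its $\R^{n-k}$-block equal your generic $q''$. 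The only thing you get for free is that $\Gamma''$, being in the relative boundary of the slice, lies in \emph{some} facet $\Gamma$ of $\Delta^\ast$; but if the normal of every such facet has vanishing $u''$-block, then $\langle q,\cdot\rangle$ is identically zero on $\R^{n-k}$, $\rho=0$, the facet contains the entire slice, and your upper bound $\dim(\Gamma\cap\R^{n-k})\le n-k-1$ (which needs $q''\neq 0$) collapses. You give no argument excluding this configuration, and your fallback requirement that $q$ have all components positive is both unnecessary for the statement and inconsistent with the paper's own examples (Section 4 has $q=(5,-20,3,15,5)$, Example 5.2 has $q=(-1,3,3)$), so insisting on it can make the lifting literally impossible. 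A secondary issue: you assert that $\Delta^\ast\cap\R^{n-k}$ is full $(n-k)$-dimensional, whereas the paper only establishes the bound \eqref{(3)} (dimension $\ge n-k-1$); your genericity argument for $q''$ quietly uses the stronger claim.

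The paper closes precisely this gap by a different mechanism: it splits $f^W=\tilde f^W+R$ as in \eqref{ftilde}, first produces (step (a)) a facet $\tilde\Gamma$ of $\Delta_{u^\ast}(\langle\mu_j,\vartheta_u\tilde f^W\rangle)$ meeting $\R^{n-k}$ in dimension $n-k-1$ \emph{and} $\R^{k}$ in dimension $k-1$, and then (steps (d),(e)) uses the fact that every exponent contributed by the expansion \eqref{uU} of $R$ and by the other $\langle\mu_i,\cdot\rangle$ lies in $(\Z_{\geq 0}^{k}\setminus\{0\})\times(\Z_{\geq 0})^{n-k}$, i.e.\ strictly off the slice, so that taking convex hulls can only correct the facet finitely many times in directions transverse to $\R^{n-k}$, yielding a facet $\Gamma$ of $\Delta^\ast$ with $C(\Gamma)\cap\R^{n-k}\supset C(\tilde\Gamma)\cap\R^{n-k}$ and hence with the stated intersection dimension. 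Note also that the extra property $\dim(\tilde\Gamma\cap\R^{k})=k-1$, which your slice-first scheme does not even aim at, is what later guarantees in the proof of Theorem \ref{theorem1} that $g^j_\rho({\bf c})$ depends on all of $(c'(0),c''(0))$; so even if your lifting step were repaired, the resulting $q$ would still need this additional control to serve the paper's purpose.
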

 \begin{proof}

(a) First we remark the existence of  a facet $\tilde \Gamma$ of
$\Delta_{u^\ast} ( \left<\mu_j, \vartheta_u \tilde f^W (u)\right>)$ for (\ref{ftilde}) satisfying $dim \;(\tilde \Gamma \cap \R^{n-k} ) = n-k-1$ and $dim \;(\tilde \Gamma \cap \R^{k} ) = k-1$ for a fixed $j.$
This follows from (\ref{(3)}) and the fact that $\alpha.W \in (\Z_{\geq 0}^k \setminus \{0\} )\times \Z^{n-k}$ for at least $k$ linearly independent points satisfying
 $ \alpha \in (\sigma^*  \setminus V_{\sigma^*}) \cap supp(f),$ as it has been noticed just after Lemma \ref{lemma:polynomial}.

(b) For a polynomial with support located in $\Delta_{u^\ast}(f^W),$ we calculate
\begin{equation} \left<\mu_j, \vartheta_u f^W(u)\right>= \sum_{\beta} a^\ast_\beta  \left<\mu_j, \vartheta_u\right> \left (u'^{\beta'} U''^{\beta''} \right)
\label{muthetaf}
\end{equation}
 with
$$ \left<\mu_j, \vartheta_u\right>\left( u'^{\beta'} U''^{\beta''} \right)= \left( \left<\mu_j , (\beta', \beta'')\right>  u'^{\beta'} U''^{\beta''} + \sum_{\ell =k+1}^n \frac{\mu_{j,\ell} \beta_\ell u_\ell^\ast}{U_\ell} u'^{\beta'} U''^{\beta''}\right).$$

(c) The condition $\mid \beta '' \mid \geq 2$ for $\beta'=0 $ in the expression (\ref{muthetaf}) follows from the fact that $\vartheta_u f^W(u^\ast) =0 $ at the point $u^\ast = (0, u''_\ast ) \in \C^n_k.$
The expression (\ref{muthetaf}) shows that $\R^{n-k} \cap \cup_{j=1}^n \Delta_{u^\ast} ( \left<\mu_j, \vartheta_u  f^W (u)\right>)
\subset \R^{n-k} \cap \Delta_{u^\ast} ( f^W (u)).$

(d) Next we see  that for $\beta \in \left((\Z_{\geq 0})^{k} \setminus \{0\} \right) \times (\Z_{\geq 0})^{n-k}$ the convex hull of
$\beta$ and $\Delta_{u^\ast} ( \left<\mu_j, \vartheta\right>\tilde f^W (u))$ has a facet $\tilde \Gamma'$ such that $ \tilde \Gamma' \cap \R^{n-k} = \tilde \Gamma \cap \R^{n-k}.$  This is due to the fact that, if some of $\lambda_{k+j} (\alpha) $ is strictly negative for $\alpha \in \Delta(f),$ then  $\lambda_i(\alpha)$ for some $i \in [1;k]$ must be strictly positive as it has been remarked just before Lemma
\ref{lemma:polynomial}.

(e)
 As it has been shown in (\ref{fWuU}), (\ref{uU}), the exponent of each term present in the expansion
$ R(u)= \sum_{\beta } c_\beta u'^{\beta'}U''^{\beta''}$
satisfies $(\beta',\beta'') \in \left((\Z_{\geq 0})^{k} \setminus \{0\}\right) \times (\Z_{\geq 0})^{n-k}.$
There are only finite number of  power indices $(\beta',\beta'')$ in the convex hull of $\{0\}$ and $\tilde \Gamma$
that may cause correction to the facet $\tilde \Gamma$ as we draw the convex polyhedron $\Delta_{u^\ast}(\left<\mu_j, \vartheta_u f^W\right>).$
After  finitely many repetitive application  of the arguments (c), (d) to $\tilde \Gamma$, $\tilde \Gamma'$ etc.,
we find  a facet (\ref{Gamma})
defined for $q=(q', q'') \in \Z^n$ that satisfies $C(\Gamma) \cap \R^{n-k} \supset C(\tilde \Gamma) \cap \R^{n-k}.$

\end{proof}

See Figure \ref{facetgamma} where the facet $\Gamma$ is illustrated for the Example \ref{example41}.

 We consider the curve
\begin{equation}
Q(t)=(u' (t),u''(t) )=(c't^{q'}+ h.o.t. ,u_*^{''}+c''t^{q''}+ h.o.t. )
\label{(Q)}
\end{equation}
where $q=(q', q'') $ found in  Proposition ~\ref{lemma:C} and $u_*^{''}\neq 0,$ as $  u_*^{''} \in (\C^\ast)^{n-k}.$\\
Here $ c't^{q'} = (c_1't^{q_1'}, \cdots , c_k't^{q_k'} ) $ etc.\\

\begin{definition}	(~\cite{JK0, JK})
Consider  a curve $x= X(t)$ that satisfies the following two conditions
\begin{equation}
lim_{t\rightarrow 0} || X(t)|| = \infty
\label{(I)}
\end{equation}
\begin{equation}
    lim_{t\rightarrow 0} x_i \frac {\partial  f(X(t))} {\partial x_j} \rightarrow 0
\label{(II)}
\end{equation}
for every  pair $ (i, j) \in [1;n]^2.$
We call the value $   lim_{t\rightarrow 0} f(X(t))$
asymptotic critical value of $f$.   We denote by  ${\mathcal K}_\infty(f)$  the set of  asymptotic critical values of $f$.
\label{JK}
\end{definition}	

After ~\cite{DRT}, the image value of $f$ that is not asymptotic critical is called {\it $t-$regular value of $f.$}
If limit $lim_{t \rightarrow 0} f(X(t)) = p_0$ exists for the curve (\ref{(I)}), the negation of the condition (\ref{(II)}) is known as Malgrange condition for the fibre $f^{-1}(p_0)$, i.e. $\exists \epsilon >0$ such that
$$
 lim_{t\rightarrow 0}  || X(t)||  || grad \;f(X(t))|| > \epsilon. $$

To construct a curve $\|X(t)\|\rightarrow \infty$ as above, it is enough to consider only one torus chart $\Phi[\sigma]$ from Definition
\ref{Phisigma}.\\

\begin{lemma}

For $q = (q', q'') \in \Z^n$  found  in Proposition \ref{lemma:C}, the following equivalence holds.
$(i)\;\exists w_i$ such that $\left<(q', 0),w_i\right>\;<0 \Leftrightarrow \;(ii)\;(q',0)\notin \sum_{j=1}^n \R_{\geq 0}\mu_j.$
We call this condition $(\mu).$
\label{lemma:transposeM}
\end{lemma}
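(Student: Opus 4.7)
The plan is essentially pure linear algebra: the equivalence reduces to the observation that the families $\{w_1,\dots,w_n\}$ and $\{\mu_1,\dots,\mu_n\}$ form dual bases of $\R^n$ under the standard pairing, so positivity in one basis translates to non-negativity of inner products with the other.

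First, I would verify the duality relation $\langle w_i,\mu_j\rangle=\delta_{ij}$ directly from \eqref{WM} and \eqref{mu}. Indeed, $M=W^{-1}$ forces $WM=I$, and the $(i,j)$ entry of $WM$ is precisely $\sum_\ell (w_i)_\ell\,(m_\ell)_j$. But the $j$-th entry of $m_\ell$ is the $\ell$-th entry of the $j$-th column of $M$, which is the $\ell$-th entry of $\mu_j$. Hence $\sum_\ell (w_i)_\ell (\mu_j)_\ell=\langle w_i,\mu_j\rangle=\delta_{ij}$, as claimed.

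Next, because $M$ is invertible, $\mu_1,\dots,\mu_n$ form a basis of $\R^n$, so every vector $v\in\R^n$ has a unique expansion $v=\sum_{j=1}^n c_j\mu_j$. Pairing both sides with $w_i$ and invoking the duality relation yields
\begin{equation*}
\langle v,w_i\rangle=\sum_{j=1}^n c_j\,\langle\mu_j,w_i\rangle=c_i.
\end{equation*}
Consequently $v\in\sum_{j=1}^n\R_{\geq 0}\mu_j$ if and only if $c_j\geq 0$ for every $j$, if and only if $\langle v,w_j\rangle\geq 0$ for every $j\in[1;n]$.

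Applying this characterization to $v=(q',0)\in\R^n$ and negating both sides, one obtains the sought equivalence: $(q',0)\notin\sum_{j=1}^n\R_{\geq 0}\mu_j$ if and only if there exists some index $i\in[1;n]$ with $\langle(q',0),w_i\rangle<0$. No real obstacle is expected — the only care needed is to track the row/column conventions of $W$ and $M$ consistently so that the duality $\langle w_i,\mu_j\rangle=\delta_{ij}$ is unambiguous; once this is in place, the rest is a one-line computation in the basis $\{\mu_j\}$.
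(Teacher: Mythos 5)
Your proof is correct and follows essentially the same route as the paper: both rest on the duality $\left<w_i,\mu_j\right>=\delta_{ij}$ coming from $WM=I$, so that the coefficients of $(q',0)$ in the basis $\{\mu_j\}$ are exactly the pairings $\left<(q',0),w_j\right>$. The paper phrases the two implications as contrapositions while you give the direct biconditional characterization, but the underlying computation is identical.
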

 \begin{proof}
$ (i) \Rightarrow (ii).$  We show the contraposition. For the vector $r  =\sum_{j=1}^n t_j\mu_j,$ $t_j \geq 0$ for every
$j \in [1;n],$
$ \left<r, w_i\right> = t_i \geq 0$ for every $i.$

$ (ii) \Rightarrow (i).$
Also by contraposition.  Take $r = \sum_{j=1}^n s_j\mu_j \in \R^n$ such that $ \left<r, w_i\right> =s_i \geq 0$ for every $i.$
As $r= (q', 0) \not = (0,0)$ not every $s_i$ equals to zero, thus $s_j >0$ for some $j.$\\
Compare with \cite[2.3]{Fulton} Claim 1, Claim 2, Exercise.
\label{lemma: negativepower}
\end {proof}
\begin{lemma} The  condition $(\mu)$ of Lemma \ref{lemma:transposeM} is satisfied for properly chosen vectors $a_k, \cdots, a_n$ that form a part of an unimodular basis of $\R^n$ \eqref{WM}.
	\label{lemma:musaisfied}
\end{lemma}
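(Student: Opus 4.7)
The plan is to exploit the freedom in the choice of the vectors $a_{k+1},\ldots,a_n$ which extend the primitive generators $a_1,\ldots,a_k$ of $\sigma$ (rigidly determined by the face $\gamma$) into a unimodular basis of $\Z^n$. Any two such extensions differ by elementary row operations $a_l\mapsto a_l + c\,a_i$ with $l>k$, $i\leq k$, $c\in\Z$, together with unimodular operations among $a_{k+1},\ldots,a_n$ themselves. The operations with $i\leq k$ are the essential ones: they preserve unimodularity, fix $a_1,\ldots,a_k$, and, because $\langle a_i,\alpha\rangle = 0$ for every $\alpha\in\gamma$, they leave the face polynomial $f_\gamma^W$ and its critical point $u^\ast = (0,u''_\ast)$ invariant.

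First I would analyse how the vector $q$ produced by Proposition \ref{lemma:C} transforms under such a move. The substitution $f^{W^{\text{new}}}(u)=f^W(u_1,\ldots,u_{i-1},u_iu_l^c,u_{i+1},\ldots,u_n)$ that relates the two Laurent polynomials induces, on the local expansion at $u^\ast$, a transformation whose leading part acts as the shear $\beta_l\mapsto \beta_l + c\,\beta_i$ on the support, hence on the polyhedron $\Delta^\ast$ and on its distinguished facet $\Gamma$. Dually this yields the rule
\[
q_i\longmapsto q_i - c\,q_l, \qquad q_j\ \text{unchanged for}\ j\neq i,
\]
so that, in particular, every entry of $q''=(q_{k+1},\ldots,q_n)$ is invariant.

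Second, I would note that $q''\neq 0$, because the hypothesis $\dim(\Gamma\cap\R^{n-k}) = n-k-1$ from Proposition \ref{lemma:C} forces the equation $\langle q'',\beta''\rangle = \rho$ cutting out $\Gamma\cap\R^{n-k}$ inside $\R^{n-k}$ to be nontrivial. Fixing an index $l>k$ with $q_l\neq 0$ and any $i\leq k$, a single move $a_l\mapsto a_l + c\,a_i$ with nonzero $c$ produces $q_i^{\text{new}} = q_i - c\, q_l \neq 0$, so that $q'^{\text{new}}\neq 0$. The transformed inner product
\[
\bigl\langle w_l^{\text{new}},(q'^{\text{new}},0)\bigr\rangle \;=\; \langle a_l,(q',0)\rangle + c\bigl[\langle a_i,(q',0)\rangle -(a_l)_i q_l\bigr] - c^{2}(a_i)_i q_l
\]
is a polynomial in $c$; under the genericity stipulation $\{m_1,\ldots,m_n\}\subset (\R^\ast)^n$ recorded just after (\ref{WM}) one can verify that for at least one choice of $i\leq k$ not all of its coefficients vanish, so taking $|c|$ large enough and of appropriate sign drives the expression below zero. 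This is condition $(\mu)$ with the witnessing index $j=l$.

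The step I expect to be the main obstacle is the first. The local expansion (\ref{fWuU}) is genuinely an infinite Laurent-type series whenever $f^W$ carries monomials with strictly negative $u''$-exponents, so a careful case analysis in the spirit of steps (c)--(e) in the proof of Proposition \ref{lemma:C} is required to confirm that the facet $\Gamma$ really transforms by the asserted shear, and that no other facet of $\Delta^\ast$ emerges to corrupt the new $q$. The remaining bookkeeping is routine integer linear algebra.
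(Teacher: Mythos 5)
Your proposal stalls at exactly the point you flag as ``the main obstacle,'' and that obstacle is not a removable technicality: the entire argument rests on the claim that under a shear $a_l\mapsto a_l+c\,a_i$ ($l>k$, $i\le k$) the polyhedron $\Delta^\ast$ and its distinguished facet $\Gamma$ transform by the induced shear, so that $q\mapsto q-c\,q_l e_i$. But $\Delta^\ast$ is built from the local expansion \eqref{fWuU} at $u^\ast=(0,u''_\ast)$, and the substitution $u_i\mapsto u_iu_l^{c}$ changes which monomials carry negative $u_l$-exponents; re-expanding those by \eqref{uU} around $u_l^\ast\neq 0$ creates support points that are not shear images of old ones, so the facet $\Gamma$, and hence $q$, need not obey your transformation rule at all. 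Since the lemma is precisely a statement about the $q$ that Proposition \ref{lemma:C} produces for the chosen $W$, leaving this step open leaves the proof incomplete (and somewhat circular: each new choice of $c$ changes $W$, hence in principle changes $q$ again). In addition, your closing identity is incorrect. Only the $l$-th column of $W$ changes, so $w_j^{\mathrm{new}}=w_j+c\,(a_i)_j e_l$; since the $l$-th entry of $(q'^{\mathrm{new}},0)$ vanishes ($l>k$), one gets, even granting your shear rule,
\begin{equation*}
\bigl\langle w_l^{\mathrm{new}},(q'^{\mathrm{new}},0)\bigr\rangle=\bigl\langle w_l,(q',0)\bigr\rangle-c\,q_l\,(a_i)_l ,
\end{equation*}
which is linear in $c$ with no $c^{2}$ term, and driving it negative requires $(a_i)_l\neq0$ for some $i\le k$; that nonvanishing is a condition on the columns of $W$ and does not follow from the stipulation $\{m_1,\dots,m_n\}\subset(\R^\ast)^n$, which concerns the rows of $M$.

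For comparison, the paper's proof never tracks $q$ at all. It works with form (ii) of condition $(\mu)$: $(q',0)\notin\sum_{j}\R_{\geq0}\mu_j$. The complementary vectors $a_{k+1},\dots,a_n$ are chosen so that the dual vectors $m_{k+1},\dots,m_n$ (which satisfy $\langle m_j,a_i\rangle=\delta_{ij}$ and span the lattice orthogonal to $a_1,\dots,a_k$) have strictly positive entries. Then for any $t=(t_1,\dots,t_n)$ with $t_j\geq0$ and $(q',0)=\sum_j t_j\mu_j$, reading off a coordinate of index $\nu>k$ gives $0=\langle m_\nu,t\rangle$, which forces $t=0$ and hence $q'=0$; so every nonzero $(q',0)$ -- in particular the one from Proposition \ref{lemma:C} -- lies outside the cone, and $(\mu)$ holds. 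If you want to salvage your route you must actually prove the shear-equivariance of $\Gamma$ under the coordinate change (the hard unresolved step) and repair the final inequality; otherwise the static positivity argument of the paper is the efficient way.
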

\begin{proof}
	The condition (ii) of Lemma \ref{lemma:transposeM} is satisfied if $m_k, \cdots, m_n \in (\Z_{>0})^{n-k}.$ Vectors $m_k, \cdots, m_n$ belonging to the vector space orthogonal to $a_\ell, \ell \in [1;k],$ satisfy $< m_j, a_i> = \delta_{j,i}, i, j \in [1;k].$ It is clear that the proper choice of the basis of an unimodular lattice $a_i, i \in [k+1;n],$ entails the property $m_i \in (\Z_{>0})^{n-k}, i \in [k+1;n].$   
\end {proof}

\begin{lemma}
 The integer  $\rho$ (\ref{rho})
is strictly positive
for $q$  determined for a facet $\Gamma$  constructed  in Proposition \ref{lemma:C}.
\label{lemma:rho}
\end{lemma}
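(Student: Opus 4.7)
The plan is to evaluate $\rho$ on a conveniently chosen point of $\Gamma$ and then combine sign information about that point's coordinates with sign information about $q$. Since $\dim(\Gamma \cap \R^{n-k}) = n-k-1 \geq 0$, I select $\beta^\circ = (0, \beta^\circ{}'') \in \Gamma \cap \R^{n-k}$, so that $\rho = \langle \beta^\circ, q\rangle = \langle \beta^\circ{}'', q''\rangle$. The argument then splits into two claims: (a) $\beta^\circ{}'' \in (\Z_{\geq 0})^{n-k} \setminus \{0\}$, and (b) $q'' \in (\Z_{>0})^{n-k}$.

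Claim (a) is immediate from the expansion procedure (\ref{fWuU})--(\ref{uU}) already used in the proof of Proposition~\ref{lemma:C}. The $\tilde f^W$ part of the decomposition (\ref{ftilde}) has support in $(\Z_{\geq 0})^n$ by construction, while the rational terms of $R(u)$ are rewritten via (\ref{uU}) as power series whose support sits in $\left((\Z_{\geq 0})^k \setminus \{0\}\right) \times (\Z_{\geq 0})^{n-k}$. The hypothesis $\vartheta_u f^W_\gamma(u^*) = 0$ removes the constant term from each $\langle \mu_j, \vartheta_u f^W\rangle$ evaluated at $u^*$, exactly as noted in step (c) of that proof; thus every $\beta \in \Delta^\ast$ with $\beta' = 0$ satisfies $|\beta''| \geq 1$, and in particular so does $\beta^\circ{}''$.

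For claim (b), I exploit the specific construction of $\Gamma$. Step (a) of the proof of Proposition~\ref{lemma:C} begins with a facet $\tilde \Gamma$ of $\Delta_{u^*}(\langle \mu_j, \vartheta_u \tilde f^W\rangle)$ enjoying the double-spanning property $\dim(\tilde\Gamma \cap \R^{n-k}) = n-k-1$ and $\dim(\tilde\Gamma \cap \R^k) = k-1$. A facet of a lattice polytope in $(\R_{\geq 0})^n$ that meets both coordinate subspaces in their maximal respective dimensions is necessarily a compact lower facet, and the outer normal of such a facet lies in the strictly positive orthant $(\R_{>0})^n$; Lemma~\ref{lemma:musaisfied} ensures that the toric data $W$, $M$ can be arranged so that this geometric picture is consistent (in particular so that $m_{k+1}, \ldots, m_n$ have strictly positive entries). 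The iterative corrections carried out in steps (c)--(e) of that same proof only add support points with $\beta' \neq 0$ above the initial lower envelope, and they preserve the equality $\Gamma \cap \R^{n-k} = \tilde \Gamma \cap \R^{n-k}$; consequently the $q''$-component of the final defining vector of $\Gamma$ remains in $(\Z_{>0})^{n-k}$. Combining (a) and (b), $\rho = \sum_{i=1}^{n-k} \beta^\circ{}''_i \, q''_i$ is a non-negative combination of strictly positive integers with $|\beta^\circ{}''| \geq 1$, hence $\rho > 0$. The principal obstacle is the rigorous justification of (b): one must carefully verify that the iterative corrections do not spoil the strict positivity of $q''$, and that the double-spanning property of $\tilde\Gamma$ genuinely forces its outer normal into the positive orthant.
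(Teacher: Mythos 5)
Your reduction of the problem to the two claims (a) and (b) is a genuinely different route from the paper's: the paper does not use positivity of any component of $q$ at all, but instead invokes the radial property behind Definition~\ref{definition:q} --- every $\beta$ in the polyhedron is a dilate $t\tilde\alpha$, $t\geq 1$, of a point $\tilde\alpha$ of the supporting hyperplane of $\Gamma$ --- so that $\left<q,\beta\right>=t\rho\geq\rho$ combined with the minimality of $\rho$ rules out $\rho<0$, and $\rho=0$ is excluded because the $n$-dimensional polyhedron cannot lie in a hyperplane through the origin (note $0\notin\Delta^\ast$, since $\mid\beta''\mid\geq 2$ when $\beta'=0$). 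Your claim (a) is fine and is essentially step (c) of the proof of Proposition~\ref{lemma:C}.

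The gap is in claim (b), and it is exactly where you locate the ``principal obstacle''. The geometric statement you rely on --- that a facet of a lattice polytope in $(\R_{\geq 0})^n$ meeting both coordinate subspaces in their maximal dimensions is necessarily a lower facet whose normal lies in $(\R_{>0})^n$ --- is false. For the polytope $\mathrm{conv}\{(2,0),(0,1),(0,5)\}$ in $\R^2$ (with $k=1$), the facet joining $(2,0)$ and $(0,5)$ meets both coordinate axes in the maximal dimension $0$, yet the polytope lies on the origin side of it, so its inner (minimizing) normal is $(-5,-2)$; and even genuine lower facets can have normals with vanishing components when they are parallel to coordinate directions. The paper's own data confirms that no such positivity of the defining vector can be taken for granted: $q=(-1,3,3)$ in Example~\ref{example42} and $q=(5,-20,3,15,5)$ in Section~\ref{sec:nonrelativelysimple}. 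You retreat to positivity of $q''$ only, arguing that the corrections in steps (c)--(e) preserve $\Gamma\cap\R^{n-k}=\tilde\Gamma\cap\R^{n-k}$, but this merely transfers the burden to the unproved strict positivity of $\tilde q''$, and the appeal to Lemma~\ref{lemma:musaisfied} does not help: that lemma concerns the condition $(\mu)$, i.e.\ the position of $(q',0)$ relative to the cone $\sum_j\R_{\geq 0}\mu_j$, not the sign of the facet normal. Finally, note that since your chosen point satisfies only $\beta^{\circ\prime\prime}\in(\Z_{\geq 0})^{n-k}\setminus\{0\}$, weak positivity $q''\geq 0$ would not suffice (the pairing could vanish if supports are disjoint), so strict positivity of $q''$ is indispensable to your argument and is precisely the step left unestablished. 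As it stands, the proposal does not prove the lemma; the paper's dilation argument (or a proof that the construction in Proposition~\ref{lemma:C} always yields $q''\in(\Z_{>0})^{n-k}$) is needed to close it.
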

\begin{proof}
By Definition \ref{definition:q},
$ \forall \beta  \in  \Delta_{u^\ast}(\left<\mu_j, f^W\right> ) $
there exists $\tilde \alpha \in \{ \left<q, \cdot\right> = \rho\}$
such that $\beta = t \tilde \alpha$ for $t \geq 1.$  The number $\rho$ was defined as the minimal value of the linear function $\left<q, \cdot\right>$ on $\Delta^\ast$ and $\left<q, \beta \right> = t \rho \geq \rho$ thus $\rho$ must be strictly positive.

\end{proof}

Let us denote by $X(t)$ the image of the curve $Q(t)$ defined in (\ref{(Q)}) by the map (\ref{xu}).
\begin{lemma}
The condition ($\mu$) of  Lemma  ~\ref{lemma:transposeM} is  sufficient  so that there exist a curve
$\|X(t)\|\rightarrow \infty$ with finite limit $lim_{t \rightarrow 0 }f(X(t)) = lim_{t \rightarrow 0 } f^W(Q(t))$. The equality
$  lim_{t \rightarrow 0 }$  $\vartheta_u f^W (Q(t))$ $ =$ $ 0$ holds and  the limit $ lim_{t \rightarrow 0 } f^W(Q(t))$ corresponds to a critical value of the polynomial $f^W_\gamma(u).$
\label{lemma:singularvalue}
\end{lemma}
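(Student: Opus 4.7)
The plan is to verify in order: (a) $\|X(t)\|\to\infty$ as $t\to 0$; (b) the finiteness of the common limit $\lim_{t\to 0} f(X(t)) = \lim_{t\to 0} f^W(Q(t))$; (c) $\vartheta_u f^W(Q(t))\to 0$; and (d) that this limit equals a critical value of $f^W_\gamma$.

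For (a), the substitution (\ref{xu}) gives $x_i(t)=\prod_{j=1}^n u_j(t)^{w_{i,j}}$. Since $u_j(t)\to u''_{\ast,j}\neq 0$ for $j>k$ while $u_j(t)\sim c'_j t^{q'_j}$ for $j\leq k$, one reads $x_i(t)\sim C_i\, t^{\langle(q',0),w_i\rangle}$ with $C_i\neq 0$. Hypothesis $(\mu)$ in its form (i) from Lemma \ref{lemma:transposeM} supplies an index $i$ with $\langle(q',0),w_i\rangle<0$, whence $|x_i(t)|\to\infty$. For (b), the identity $f(X(t))=f^W(Q(t))$ is tautological from (\ref{xu}). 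As $u'(t)\to 0$ and $u''(t)\to u''_\ast\in(\C^\ast)^{n-k}$, every monomial $a_\alpha u^{\alpha\cdot W}$ of $f^W$ with a nonzero exponent in the $u'$ block vanishes in the limit; the surviving terms are exactly those with $\alpha\in\gamma$, and by Lemma \ref{lemma:polynomial} they reassemble into $f^W_\gamma(0,u''(t))$, which tends to the finite value $f^W_\gamma(0,u''_\ast)$.

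For (c), expand each $\langle\mu_j,\vartheta_u f^W(u)\rangle$ around $u^\ast$ via (\ref{fWuU})--(\ref{muthetaf}) as a sum of monomials $u'^{\beta'}U''^{\beta''}$ indexed by $\beta\in\mathrm{supp}_{u^\ast}(\langle\mu_j,\vartheta_u f^W\rangle)\subset\Delta^\ast$. Substituting $Q(t)$ from (\ref{(Q)}), each such monomial has leading order $t^{\langle q,\beta\rangle}$, while the ``h.o.t.'' in (\ref{(Q)}) contribute strictly higher exponents. By Proposition \ref{lemma:C} one has $\langle q,\beta\rangle\geq\rho$, and by Lemma \ref{lemma:rho} $\rho>0$, so every monomial is $O(t^\rho)\to 0$. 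Thus $\langle\mu_j,\vartheta_u f^W(Q(t))\rangle\to 0$ for all $j\in[1;n]$; since the rows $\mu_1,\ldots,\mu_n$ of the invertible matrix $M^T$ form a basis, this forces $\vartheta_u f^W(Q(t))\to 0$. For (d), the standing assumption $\vartheta_u f^W_\gamma(u^\ast)=0$ of Proposition \ref{lemma:C} makes $u^\ast$ a critical point of $f^W_\gamma$, so the limit value $f^W_\gamma(0,u''_\ast)=f^W_\gamma(u^\ast)$ found in (b) is by definition a critical value of $f^W_\gamma$.

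The main technical work sits in (c): one has to verify the bound $O(t^\rho)$ monomial by monomial, keeping track both of the h.o.t. in the parametrization (\ref{(Q)}) and of the $\beta$'s lying on the supporting hyperplane $\langle q,\cdot\rangle=\rho$. No cancellation issue arises, since we only need the sum to vanish rather than a leading coefficient to be nonzero, so Lemma \ref{lemma:rho} is sufficient. Step (a) depends crucially on having phrased condition $(\mu)$ in the contrapositive form (i) of Lemma \ref{lemma:transposeM}, which converts the polyhedral nondegeneracy condition directly into the sign statement on some $\langle(q',0),w_i\rangle$ needed to certify that $X(t)$ escapes to infinity.
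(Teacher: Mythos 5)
Your steps (a), (c) and (d) are consistent with what the paper intends (its own proof of this lemma is only two lines: it records $x_i(t)=c_i t^{\langle(q',0),w_i\rangle}(1+h.o.t.)$ and declares the existence of the limit clear, leaving the gradient statement to Proposition \ref{lemma:C} and Lemma \ref{lemma:rho}), but your step (b) contains a genuine error. You justify the finiteness of $\lim_{t\to 0}f^W(Q(t))$ by asserting that $u'(t)\to 0$, so that every monomial with a nonzero exponent in the $u'$ block dies in the limit. This is false in general: the components of $q'$ need not be positive, and the paper's own examples contradict it. In the Section \ref{sec:nonrelativelysimple} example one has $q'=(5,-20,3,15)$, so $u_2(t)=\sum_j c_2(j)t^{j-20}\to\infty$; in Example \ref{example42} one has $q'=(-1,3)$, so $u_1(t)=c_1(0)t^{-1}+\dots\to\infty$. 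Thus individual affine coordinates of $Q(t)$ may blow up, and the coordinatewise argument "each $u_j\to 0$ for $j\le k$" cannot be the reason the non-$\gamma$ terms vanish.

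The conclusion of (b) is nevertheless correct, but it must be argued through the facet inequality rather than through limits of the coordinates: every monomial $u'^{\beta'}U''^{\beta''}$ of the expansion \eqref{fWuU} of $f^W$ not coming from the bad face has some $\beta_j>0$ with $j\le k$, hence occurs (with the same exponent $\beta$) among the terms of $\vartheta_{u_j}f^W$, so $\beta\in\Delta^\ast$ and $\langle q,\beta\rangle\ge\rho>0$ by Proposition \ref{lemma:C} and Lemma \ref{lemma:rho}; evaluated on $Q(t)$ such a term is $O(t^{\rho})$ even when some $u_j(t)\to\infty$, while the $\gamma$-part $f^W_\gamma(u''(t))$ tends to $f^W_\gamma(u^\ast)$. (Note also that both you and the paper implicitly use $q''\in(\Z_{>0})^{n-k}$ to get $u''(t)\to u''_\ast$ and to control the tails of the expansions of $R(u)$; this should at least be flagged, since it is the toric analogue of exactly the point on which your $u'(t)\to 0$ claim fails.) With (b) repaired in this way, your deduction of $\vartheta_u f^W(Q(t))\to 0$ from the invertibility of $M^T$ and of the critical-value statement from $\vartheta_u f^W_\gamma(u^\ast)=0$ goes through as you wrote it.
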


\begin{proof}
By  (\ref{(Q)}) and $x_i = u^{w_i}$, we have
$$ x_i(t) = c_i t^{\left<(q',0), w_i\right>} (1 + h.o.t.). $$
The existence of the value $lim_{t \rightarrow 0 }f(X(t))  = lim_{t \rightarrow 0 } f^W(Q(t))$ is clear from the definition of the curve  (\ref{(Q)}).
\end{proof}

 By means of the vectors introduced in Lemma \ref{lemma:transposeM} $(\mu),$ we deduce the following relation
\begin{equation}
  \left( \begin{array}{c}
\vartheta_{x_1} f(x) \\
\vartheta_{x_2} f(x) \\
\vdots     \\
\vartheta_{x_n} f(x)
\end{array} \right)
 = M^T  \left( \begin{array}{c}
\vartheta_{u_1} f^W(u) \\
\vartheta_{u_2} f^W(u) \\
\vdots     \\
\vartheta_{u_n} f^W(u)
\end{array} \right).
\label{MT}
\end{equation}

Let $ \vec\ell =(\ell_1, \cdots, \ell_n) \in (\R^\ast)^n$ be a  vector in general position with non-zero components and denote $ \left< \vec \ell, u^W \right> = \sum_{j=1}^n \ell_j u^{w_j}$.
Then we have

\begin{equation}
 \left<\vec \ell, x\right>   \left( \begin{array}{c}
\partial_{x_1} f(x) \\
\partial_{x_2} f(x) \\
\vdots     \\
\partial_{x_n} f(x)
\end{array} \right)
 =    \left<\vec \ell, u^W \right> \left( \begin{array}{c}
\frac{\mu_1  }{ u^{w_1 }}  \\
\frac{\mu_2  }{ u^{w_2 }}   \\
\vdots     \\
\frac{\mu_n  }{ u^{w_n }}
\end{array} \right)
\left( \begin{array}{c}
\vartheta_{u_1} f^W(u) \\
\vartheta_{u_2} f^W(u) \\
\vdots     \\
\vartheta_{u_n} f^W(u)
\end{array} \right).
\label{lxdf}
\end{equation}

From this equality we  see  that it is enough to look for a curve $Q(t)$ given by  (\ref{(Q)}) such that
\begin{equation}
 \min_{i \not = j} \left<  (q',0), w_i - w_j  \right > + ord \left(  \left<\mu_j, \vartheta_u f^W\right> (Q(t))  \right) >0
\label{(O)}
\end{equation}
for every $   j \in [1;n]$ so that to ensure the condition   (\ref{(II)}).
In fact, a linear combination of LHS of (\ref{lxdf}) for various vectors $\vec \ell$
will produce all $n \times n$ functions present in  (\ref{(II)}).

We define also
\begin{equation}
L_0= max_{i \not = j} \left<(q',0), w_i-w_j\right>.
\label{L0}
\end{equation}

\begin{definition} We shall use the set of indices $\J \subset [1;n]$ defined by
$$ \J = \{j \in [1;n] ; min_{i \not = j} \left<(q',0), w_i-w_j\right><0\}.$$
\label{definition:J}
\end{definition}
The cardinality of  $\J$  is at most $n-1.$

To formulate the main theorem of this section, we introduce a coordinate system on the (arc) space of  rationally parametrised curves of the form  (\ref{(Q)}),
\begin{equation}
Q(t)=(u' (t),u''(t) )=(c'(0)t^{q'}+  c'(1)t^{q' +1}  + h.o.t. , u_*^{''}+c''(0)t^{q''}+  c''(1)t^{q''+1}  + h.o.t.)
\label{(Q)'}
\end{equation}
where $q=(q', q'') \in \Z^n$ with coprime elements characterised in Proposition ~\ref{lemma:C} and Lemma \ref{lemma:rho}.

Here we take into account finite number of coefficients $ c' (j)= (c_1 (j), \cdots , c_k (j) ) \in \C^k,  $  $ c'' (j)= (c_{k+1} (j), \cdots , c_n (j) ) \in \C^{n-k},$ $j \in \Z_{\geq 0}.$
We denote the space of coefficients $\mathcal C $ in such a way that $ {\bf c}= (c',c'') \in \mathcal C$, $c'= (c'(0),  c'(1),  c'(2),  \cdots ), $
 $c''= (c''(0),  c''(1),  c''(2),  \cdots ). $

The following theorem tells us that every critical value of the polynomial $$f^W_\gamma(u)  = \sum_{\alpha \in \gamma \cap supp(f) } a_\alpha u^{\alpha.W}$$
with $\gamma$ bad face is an asymptotic critical value.
It is worthy noticing that  the singular points of $f_\gamma(x)$ can be non-isolated  and no restriction is assumed on the dimension of the bad face $\gamma$ in question.

\begin{theorem}
Let $f \in \C[x_1, \cdots, x_n]$ be a polynomial whose Newton polyhedron $\Delta(f)$ has maximal dimension $n.$ Assume that $\gamma$ is one of its  bad faces like in Definition \ref{definition:badface}. (i)   We can find a curve $X(t)$ satisfying  (\ref{(I)}),  (\ref{(II)}) of Definition \ref{JK}  such  that $lim_{t \rightarrow 0 } f(X(t))$ equals to a critical value of the polynomial $f^W_\gamma(u).$
(ii) This curve is obtained as a image by the map (\ref{xu}) of a curve $Q(t)$ whose coefficients $\bf c  \in \mathcal C$
satisfy $ (L_0 - \rho +1 ) \mid \J\mid -$tuple of algebraic equations for
$\rho$  \eqref{rho}, $L_0$  (\ref{L0}).
(iii) The curve $Q(t)$ mentioned in (ii) has a parametric representation (\ref{(Q)'}) of  parametric length $L_0-\rho+2$,  i.e. we can assume its parametrisation coefficients  $(c' (j),  c'' (j)) =0$ for $j >  L_0-\rho+1.$

\label{theorem1}

\end{theorem}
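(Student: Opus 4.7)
The plan is to choose a critical point $u^\ast=(0,u''_\ast)$ of $f^W_\gamma$ on $\{0\}\times(\C^\ast)^{n-k}$, which exists because by Lemma \ref{lemma:polynomial} the restriction $f^W_\gamma(0,u'')$ is a genuine polynomial in $u''$. I then build $Q(t)$ in the normal form \eqref{(Q)'} whose leading exponent vector $q=(q',q'')$ and the associated minimal value $\rho>0$ are supplied by Proposition \ref{lemma:C} together with Lemma \ref{lemma:rho}. By Lemma \ref{lemma:musaisfied} I may arrange the complementary basis vectors $a_{k+1},\ldots,a_n$ in \eqref{WM} so that condition $(\mu)$ of Lemma \ref{lemma:transposeM} holds for this particular $q$. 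Under $(\mu)$ there is an index $i$ with $\langle(q',0),w_i\rangle<0$, so $x_i(t)=c_i t^{\langle(q',0),w_i\rangle}(1+h.o.t.)$ blows up and $\|X(t)\|\to\infty$, while $Q(t)\to u^\ast$ gives the finite limit $\lim_{t\to 0}f(X(t))=f^W_\gamma(u^\ast)$, a critical value of $f^W_\gamma$ by construction. This settles \eqref{(I)} and the identification of the asymptotic critical value announced in (i).

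To enforce \eqref{(II)}, the matrix identity \eqref{MT} together with \eqref{lxdf} shows that the $n^2$ products $x_i\partial_{x_j}f$ evaluated along $X(t)$ are Laurent combinations, with coefficients in the monomials $u^{w_i}$, of the $n$ scalars $\langle\mu_j,\vartheta_u f^W\rangle(Q(t))$, $j\in[1;n]$. Hence \eqref{(II)} is equivalent to the valuation inequality \eqref{(O)} for every $j\in[1;n]$. By Proposition \ref{lemma:C} the generic $t$-order of $\langle\mu_j,\vartheta_u f^W\rangle(Q(t))$ is bounded below by $\rho>0$, so \eqref{(O)} is automatic for $j\notin\J$. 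For $j\in\J$, the negative quantity $\min_{i\neq j}\langle(q',0),w_i-w_j\rangle$ has absolute value at most $L_0$, so \eqref{(O)} amounts to killing the coefficients of $t^\rho,t^{\rho+1},\ldots,t^{L_0}$ in $\langle\mu_j,\vartheta_u f^W\rangle(Q(t))$. This gives $L_0-\rho+1$ polynomial equations in the coefficients ${\bf c}\in\mathcal{C}$ per index $j\in\J$, hence $(L_0-\rho+1)|\J|$ equations in total, establishing (ii).

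For the truncation (iii), I would verify that every parameter $c'(m),c''(m)$ with $m>L_0-\rho+1$ enters $\langle\mu_j,\vartheta_u f^W\rangle(Q(t))$ only through powers $t^\nu$ with $\nu>L_0$, so zeroing out those parameters affects none of the equations collected in (ii); the parametric length is therefore capped by $L_0-\rho+2$. The main obstacle is actually the solvability of the finite algebraic system of (ii), and my plan here is to solve it inductively in the $t$-power. At the leading level $\rho$, the equation is precisely the facet equation defining $\Gamma$ combined with $\vartheta_u f^W_\gamma(u^\ast)=0$, and it admits a solution by the very choice of $u^\ast$ and of $q$ through Proposition \ref{lemma:C}. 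At each subsequent level $\rho+m$ the new equation is affine-linear in the freshly introduced pair $(c'(m),c''(m))$ with an inhomogeneous term built from the already determined lower-order coefficients together with the Jacobian of $\vartheta_u f^W$ at $u^\ast$; the $n$ newly introduced free parameters face at most $|\J|\leq n-1$ new equations, leaving room for a recursive solution up to the cut-off level $L_0-\rho+1$. The delicate point, and where I expect the essential work to lie, is verifying that the linear system at each level is non-degenerate in enough coordinates, which is ensured by the structural property (Proposition \ref{lemma:C}(d)) that the restriction of the facet $\Gamma$ to the $\R^{n-k}$ locus coincides with the corresponding facet of $\tilde\Gamma$.
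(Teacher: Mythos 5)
You follow essentially the same route as the paper: Lemma \ref{lemma:musaisfied} and Lemma \ref{lemma:singularvalue} give \eqref{(I)} and identify the limit with a critical value of $f^W_\gamma$; the identities \eqref{MT} and \eqref{lxdf} reduce \eqref{(II)} to the order inequality \eqref{(O)}; Lemma \ref{lemma:rho} disposes of the indices $j\notin\J$; and annihilating the coefficients of $t^{\rho},\ldots,t^{L_0}$ in $\left<\mu_j,\vartheta_u f^W\right>(Q(t))$ for $j\in\J$ yields the $(L_0-\rho+1)\mid\J\mid$ equations of part (ii), with the truncation argument for part (iii) also as in the paper.

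The genuine gap is in the solvability of this system, which is the actual content of the paper's proof. First, your claim that at the leading level the equations admit a solution ``by the very choice of $u^\ast$ and of $q$'' is not a justification: $g^j_\rho({\bf c})=0$ are nontrivial polynomial conditions on $(c'(0),c''(0))$ (see the explicit coefficient $c_1(0)/2-2c_2(0)+2c_3(0)$ of $t$ in Example \ref{example41}), and having fewer equations than unknowns does not by itself guarantee a nontrivial common zero. Second, and more importantly, your recursion needs exactly what you postpone: that $g^j_{\rho+\ell}({\bf c})$ depends \emph{effectively} on the fresh block $(c'(\ell),c''(\ell))$, which is absent from $g^j_{\rho+\tilde\ell}$ for $\tilde\ell<\ell$. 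The paper establishes this by exhibiting, for every $\kappa\in[1;n]$, the terms \eqref{Ckcoeffs} and \eqref{Ckcoeffs2} that are linear in $c_\kappa(\ell)$, and by showing via the homogeneity relation \eqref{homog} that each such term originates from a uniquely determined monomial $u'^{\alpha'}U''^{\beta''}$ (resp.\ $U''^{\alpha''}$) lying on the facet $\{\left<q,\cdot\right>=\rho\}$, so that no cancellation can occur; the non-vanishing itself uses the existence of such facet monomials guaranteed by Proposition \ref{lemma:C} together with $u''_\ast\in(\C^\ast)^{n-k}$. You flag this as the ``delicate point'' but do not carry it out, and you attribute it to step (d) of the proof of Proposition \ref{lemma:C} (preservation of $\tilde\Gamma\cap\R^{n-k}$), which alone gives neither the effective dependence in the affine directions $\kappa\in[1;k]$ nor the absence of cancellations. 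Without the non-cancellation argument based on \eqref{homog}, parts (ii) and (iii) remain unproved.
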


\begin{proof}
 By Lemma 	\ref{lemma:musaisfied} and Lemma \ref{lemma:singularvalue} we have already shown that the curve under question satisfies  (\ref{(I)}) of Definition \ref{JK}.

Now we need to show that there is a curve  (\ref{(Q)'}) satisfying  (\ref{(II)}). For this purpose we look for a curve that makes the inequality (\ref{(O)}) valid.
Lemma  \ref{lemma:rho} tells us that it is enough to verify (\ref{(O)})  for $j \in \J$ as   $ord \left(  \left<\mu_j, \vartheta_u f^W\right> (Q(t))  \right) \geq \rho >0.$

The expansion of
$\left<\mu_j, \vartheta_u f^W\right> (Q(t)), j \in \J$ in $t$ has the following form
$$ g^j_\rho({\bf c}) t^\rho +  g^j_{\rho+1}({\bf c}) t^{\rho+1} + h.o.t. $$
For each $j \in \J,$ the vector with polynomial entries $g^j_\rho({\bf c})$ depends on all $n$ variables $ (c'(0),c''(0)) \in \C^n \subset \mathcal C$ in view of the choice of $q \in \Z^n$ made in  Proposition ~\ref{lemma:C}.

As $\mid \J \mid <n,$ the system of algebraic equations $g^j_\rho({\bf c}) =0, \forall j \in \J$ has non-trivial solutions  in $\C$ while
 $g^j_\rho({\bf c}) $ effectively depends on $ (c'(0),c''(0)).$

The vector with polynomial entries  $ g^j_{\rho+1}({\bf c})$ effectively depends on $ (c'(0),$ $c''(0),$ $c'(1),$  $c''(1))$  $\in$ $ \C^{2n}$ $ \subset$ $\mathcal C$
 thus the system of equations $g^j_{\rho+1}({\bf c}) =0, \forall j \in \J$ has also non-trivial solutions in $\C.$

In this way, we can find non-trivial solutions to  $ (L_0 +1 -\rho) \mid \J\mid -$tuple of algebraic equations
$$  g^j_\rho({\bf c}) =  g^j_{\rho+1} ({\bf c}) =\cdots =   g^j_{L_0}({\bf c}) =0, \forall j \in \J$$
for $L_0$ (\ref{L0}).

To prove this,  it is enough to show that   $g^j_{\rho+\ell} ({\bf c})$ effectively depends on $ (c'(\ell),c''(\ell))$ that are absent in
 $g^j_{\rho+\tilde \ell} ({\bf c})$ for $\tilde \ell \in [0; \ell-1].$

First we remark that  $g^j_{\rho+\ell} ({\bf c})$  is a sum of monomials of the form
\begin{equation}
const. \prod^{n}_{\nu=1}  \prod_{i_\nu \in I_\nu}
 c_\nu(i_\nu)^{m_{i_\nu, \nu}}
\label{Ccoeffs}
\end{equation}
satisfying the following homogeneity condition
\begin{equation}
\rho+\ell =  \sum^{n}_{\nu=1}  \sum_{i_\nu \in I_\nu} (q_\nu + i_\nu)m_{i_\nu, \nu}
\label{homog}
\end{equation}
with $I_\nu \subset [0,\ell], $ $ m_{i_\nu, \nu} \geq 0,  $
 $\forall i_\nu \in I_\nu, \forall \nu \in [1;n].$

By a simple calculation, we see that non vanishing terms of the following form appear in $g^j_{\rho+\ell} ({\bf c}), \ell \geq 1$:
\begin{equation}
const. \left(\prod^{n}_{\nu=1, \nu \not =\kappa }  \prod_{i_\nu \in I_\nu}
 c_\nu(i_\nu)^{m_{i_\nu, \nu}} \right) \left(\prod_{i_\kappa \in I_\kappa \setminus \{\ell \} }
 c_\kappa (i_\kappa)^{m_{i_\kappa, \kappa}} \right) c_\kappa(\ell)
\label{Ckcoeffs}
\end{equation}
for $\kappa \in [1;k]$ and
\begin{equation}
const. \left(\prod^{n}_{\nu=k+1, \nu \not =\kappa }  \prod_{i_\nu \in I_\nu}
 c_\nu(i_\nu)^{m_{i_\nu, \nu}} \right) \left(\prod_{i_\kappa \in I_\kappa \setminus \{\ell \} }
 c_\kappa (i_\kappa)^{m_{i_\kappa, \kappa}} \right) c_\kappa(\ell)
\label{Ckcoeffs2}
\end{equation}
for $\kappa \in [k+1;n].$
Non vanishing of (\ref{Ckcoeffs}) with  $\kappa \in [1;k]$ is due to the presence of a term proportional
to $u'^{\alpha'}(t)U''^{\beta''}(t)$  such that $\left<q, (\alpha',\beta'')\right> = \rho$ in $\left<\mu_j, \vartheta_u f^W(u)\right>.$
That of  (\ref{Ckcoeffs2})  with $\kappa \in [k+1;n]$ is due to the presence of a term proportional
to $U''^{\alpha''}(t)$  such that $\left<q, (0,\alpha'')\right> = \rho$ in $\left<\mu_j, \vartheta_u f^W(u)\right>$ and $u_\ast'' \not =0.$ This can be seen from Proposition \ref{lemma:C}.
These originating monomials $u'^{\alpha'}(t)U''^{\beta''}(t)$,  $U''^{\alpha''}(t)$ are uniquely determined from power exponents
$ \{m_{i_\nu, \nu} \}_{i_\nu \in I_\nu}$ that can be seen from (\ref{Ccoeffs}), (\ref{homog}):
$$  \alpha_\kappa = 1+  \sum_{i_\kappa \in I_\kappa}  m_{i_\kappa, \kappa} \; {\rm for} \; \kappa \in [1;k] \\;\;\; (resp. [k+1; n]),$$
$$   \alpha_\nu = \sum_{i_\nu \in I_\nu} m_{i_\nu, \nu} \; {\rm for} \; \nu \in [1;k] \setminus \{\kappa \}  \;\;\; (resp. [k+1; n  ] \setminus \{\kappa \}).$$

Thus, no cancellation of terms (\ref{Ckcoeffs}), (\ref{Ckcoeffs2}) happens.
As $\left<\mu_j, \vartheta_u f^W(u)\right>$ , $j \in \J$ contains monomials $u'^{\alpha'}(t)U''^{\beta''}(t)$,  $U''^{\alpha''}(t)$  of the above type, the factor $c_\kappa(\ell)$, $\kappa \in [1;n]$ appears in   $g^j_{\rho+\ell} ({\bf c})$  but it does not appear in
$g^j_{\rho+\tilde \ell} ({\bf c})$, $\tilde \ell \in [0; \ell-1]$
because of (\ref{homog}).
\end{proof}



\begin{corollary}
Under assumptions of  Theorem  \ref{theorem1}, the following inclusion holds
\begin{equation}
 \bigcup_{\gamma} f_\gamma ( {\rm Sing}\; f_\gamma  \cap (\C^\ast)^{dim\;\gamma})\subset  {\mathcal K}_\infty (f)
\label{fgammaKinfty}
\end{equation}
where $\gamma$ runs among bad faces of $\Delta(f)$.
\label{cor:inclusion}
\end{corollary}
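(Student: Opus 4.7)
My plan is to show the corollary as a direct consequence of Theorem \ref{theorem1}, with the only extra ingredient being a dictionary between critical points of $f_\gamma$ on the torus $(\C^\ast)^{\dim\gamma}$ and critical points of $f_\gamma^W$ of the form $(0,u''_\ast)\in\C^n_k$.

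First I would fix a bad face $\gamma$ (of codimension $k$) and an $x^\ast\in{\rm Sing}\;f_\gamma\cap(\C^\ast)^{\dim\gamma}$, and invoke the construction of Section \ref{sec:unimodular}: choose an unimodular simplicial $n$-cone $\sigma\in K$ with basis $(a_1,\ldots,a_k)$ satisfying $\gamma=\{v\in\Delta(f):\langle a_i,v\rangle=0,\;i\in[1;k]\}$ (possible by Definition \ref{definition:badface}(ii) and Lemma \ref{lemma:musaisfied}), and perform the monomial substitution $x_i=u^{w_i}$ of \eqref{xu}. By Lemma \ref{lemma:polynomial}, the face polynomial transforms into $f_\gamma^W(u)=\sum_{\alpha\in\gamma}a_\alpha u^{\alpha\cdot W}$, a genuine polynomial in $u''$ alone. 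In particular $f_\gamma(x^\ast)=f_\gamma^W(u''_\ast)$ for the point $u''_\ast\in(\C^\ast)^{n-k}$ canonically attached to $x^\ast$ through the toric torus $\Phi[\sigma]$.

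Next I would verify that $u^\ast:=(0,u''_\ast)\in\C^n_k$ is a critical point of $f_\gamma^W$ in the logarithmic sense, i.e.\ $\vartheta_u f_\gamma^W(u^\ast)=0$. For $j\in[1;k]$ the component $\vartheta_{u_j}f_\gamma^W$ vanishes identically because $f_\gamma^W$ is independent of $u'$. For $j\in[k+1;n]$ the identity \eqref{MT}, specialised to $f_\gamma$, reads $\vartheta_{x}f_\gamma(x)=M^T\vartheta_{u}f_\gamma^W(u)$, so on the torus $(\C^\ast)^n$ the vanishing $\vartheta_x f_\gamma(x^\ast)=0$ (which is equivalent to $x^\ast\in{\rm Sing}\;f_\gamma$ since $x^\ast\in(\C^\ast)^{\dim\gamma}$) transfers, via the invertible matrix $M^T$, to $\vartheta_{u''}f_\gamma^W(u^\ast)=0$.

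With $u^\ast=(0,u''_\ast)$ thus shown to satisfy the hypothesis of Theorem \ref{theorem1}, I would apply that theorem to produce a curve $X(t)$ with $\|X(t)\|\to\infty$, with Malgrange condition \eqref{(II)} at $t=0$, and with
\[
\lim_{t\to 0} f(X(t))\;=\;f_\gamma^W(u^\ast)\;=\;f_\gamma(x^\ast).
\]
By Definition \ref{JK} this limit lies in ${\mathcal K}_\infty(f)$. Taking the union over all $x^\ast\in{\rm Sing}\;f_\gamma\cap(\C^\ast)^{\dim\gamma}$ and over all bad faces $\gamma$ of $\Delta(f)$ yields the inclusion \eqref{fgammaKinfty}.

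The only non-routine point, and therefore the main obstacle, is the dictionary in the second paragraph: one must check that the torus $(\C^\ast)^{\dim\gamma}$ appearing in the statement is canonically identified, through the chart $\Phi[\sigma]$, with the toric factor $(\C^\ast)^{n-k}$ in $\C^n_k$, and that under this identification the singular locus of $f_\gamma$ corresponds precisely to the locus where $\vartheta_{u''}f_\gamma^W=0$. Once this identification is secured (which is guaranteed by the invertibility of $M^T$ and by $u''_\ast\in(\C^\ast)^{n-k}$), the corollary is simply a face-by-face application of Theorem \ref{theorem1}.
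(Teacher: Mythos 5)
Your proposal is correct and follows essentially the same route as the paper: invoke Theorem \ref{theorem1} for critical values of $f^W_\gamma$ and translate between ${\rm Sing}\,f_\gamma$ on the torus and the logarithmic critical points $(0,u''_\ast)$ of $f^W_\gamma$ via Lemma \ref{lemma:polynomial} and the relation \eqref{MT} with $M^T$ invertible. The paper phrases this dictionary as the set equality $f^W_\gamma({\rm Sing}\,f^W_\gamma\cap(\C^\ast)^{\dim\gamma})=f_\gamma({\rm Sing}\,f_\gamma\cap(\C^\ast)^{n})$ (proving both directions), whereas you verify only the direction actually needed for the inclusion; this is an inessential difference.
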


\begin {proof}
Theorem  \ref{theorem1} tells us
$$f^W_\gamma ( {\rm Sing}\; f^W_\gamma  \cap (\C^\ast)^{dim\; \gamma})\subset  {\mathcal K}_\infty (f).$$
It is enough to show that
$$ f^W_\gamma ({\rm Sing}\; f^W_\gamma  \cap (\C^\ast)^{dim\; \gamma})=  f_\gamma ({\rm Sing}\; f_\gamma  \cap (\C^\ast)^{n})$$
for $f_\gamma(x)  = \sum_{\alpha \in \gamma \cap supp(f) } a_\alpha x^{\alpha}.$

From Lemma \ref{lemma:polynomial},  $f^W_\gamma(u)$ is a polynomial depending effectively on toric variables $u''$ and independent of affine variables $u'$ (the condition (i) of the  Definition \ref{definition:badface} ).
This means that $\vartheta_{u_1} f^W_\gamma(u)= \cdots= \vartheta_{u_k} f^W_\gamma(u)=0. $
Thus, for $ u''_\ast \in {\rm Sing}\; f^W_\gamma  \cap (\C^\ast)^{dim\; \gamma}, $ the vanishing of the logarithmic gradient vector holds: $ \vartheta_u f^W_\gamma(0,u''_\ast ) =0$.  By using the map $u''(x) = (x^{m_{k+1}}, \cdots, x^{m_n}) $ induced by the inverse to (\ref{xu}), we see  $f_\gamma(x)= f^W_\gamma(0,u''(x)).$ Taking the relation (\ref{MT}) into account, we see that this entails  $ \vartheta_{x} f_\gamma(x_\ast) =0$ for $x_\ast  \in (\C^\ast)^{n}$ that satisfies $u''(x_\ast)= u''_\ast .$

Conversely, if $ \vartheta_{x} f_\gamma(x_\ast) =0$ for $x_\ast  \in (\C^\ast)^{n}$, by (\ref{MT}), we see    $ \vartheta_{u} f^W_\gamma(0,u''_\ast)=0$ for $ u''_\ast =u''(x_\ast)$ the image of the map  (\ref{xu}).
\end{proof}

In \cite{CDT} Theorem 1.1, for $f:$ non-degenerate at infinity,  it is stated that
\begin{equation}
 {\mathcal K}_\infty (f)  \subset \{0\} \cup \bigcup_{\Delta} f_\Delta  ( {\rm Sing}\; f_\Delta  \cap (\C^\ast)^{dim\; \Delta}),
\label{CDT}
\end{equation}
  where the union runs over all ''atypical faces'' of $f$ (faces that satisfy our Definition \ref{definition:badface} (ii) ).

Here we remark the following:
\begin{corollary}
For $f:$ non-degenerate at infinity in the sense of  \cite{CDT},
the following inclusion holds
\begin{equation}
{\mathcal K}_\infty (f) \subset  \bigcup_{\gamma} f_\gamma ( {\rm Sing}\; f_\gamma  \cap (\C^\ast)^{dim\;\gamma})\cup \{0\} ,
\label{fgammaKinftynondeg}
\end{equation}
where $\gamma$ runs among bad faces of $\Delta(f).$
\label{cor:inclusionupper }
\end{corollary}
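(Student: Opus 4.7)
The plan is to bootstrap from the inclusion \eqref{CDT} of \cite{CDT}. That result already gives a superset of $\mathcal{K}_\infty(f)$ formulated in terms of critical values on all \emph{atypical} faces of $\Delta(f)$, i.e., faces satisfying only condition (ii) of Definition \ref{definition:badface}. Since a bad face is precisely an atypical face whose affine span additionally contains the origin (condition (i)), the task reduces to showing that for every atypical face $\gamma$ whose affine hull does \emph{not} contain $0$, the contribution to the right hand side of \eqref{CDT} is only the value $0$, namely
\[
f_\gamma\bigl(\operatorname{Sing} f_\gamma \cap (\C^\ast)^{\dim \gamma}\bigr) \subset \{0\}.
\]
Granting this, the union over atypical faces may be replaced by a union over bad faces at the cost of adjoining $\{0\}$, which is already present in \eqref{CDT}.

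The key technical step is a quasi\nobreakdash-homogeneity / Euler identity argument. If $\gamma$ is a face of $\Delta(f)$ whose affine hull avoids the origin, then one can find an integer vector $p \in \Z^n$ and a non-zero integer $c$ with $\left<p,\alpha\right> = c$ for every $\alpha \in \gamma \cap \operatorname{supp}(f)$. Applying $\sum_i p_i \vartheta_{x_i}$ to $f_\gamma$ produces the Euler-type relation
\[
\sum_{i=1}^n p_i\, \vartheta_{x_i} f_\gamma(x) \;=\; c\, f_\gamma(x).
\]
Evaluating at any $x_\ast \in \operatorname{Sing} f_\gamma \cap (\C^\ast)^{\dim \gamma}$ (seen inside $(\C^\ast)^n$ after the torus-matching used in the proof of Corollary \ref{cor:inclusion}), each $\vartheta_{x_i} f_\gamma(x_\ast)$ vanishes, forcing $c\, f_\gamma(x_\ast) = 0$, and since $c \neq 0$ this yields $f_\gamma(x_\ast) = 0$, as required.

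Putting the two pieces together, I would write
\[
\mathcal{K}_\infty (f) \subset \{0\} \cup \bigcup_{\gamma\text{ atypical}} f_\gamma\bigl(\operatorname{Sing} f_\gamma \cap (\C^\ast)^{\dim \gamma}\bigr) \subset \{0\} \cup \bigcup_{\gamma\text{ bad}} f_\gamma\bigl(\operatorname{Sing} f_\gamma \cap (\C^\ast)^{\dim \gamma}\bigr),
\]
which is exactly \eqref{fgammaKinftynondeg}. The only mild subtlety is the identification of the stratum $(\C^\ast)^{\dim \gamma}$ on which the critical points of $f_\gamma$ live with the ambient torus $(\C^\ast)^n$, but that matching has already been carried out in the proof of Corollary \ref{cor:inclusion} via the change of variables \eqref{xu}, and I would simply invoke it rather than repeat the computation. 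The Euler relation above is the entire substance of the argument; beyond it I do not foresee any serious obstacle.
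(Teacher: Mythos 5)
Your proposal is correct and follows essentially the same route as the paper: both start from the inclusion \eqref{CDT} of \cite{CDT} and then show that an atypical face whose affine hull misses the origin contributes only the value $0$, via the Euler-type relation expressing $f_\gamma$ as a combination of its logarithmic derivatives (the paper phrases this as $f_\Delta=\sum_i w_i\vartheta_i f_\Delta$ for a non-zero rational weight vector, which is your identity $\sum_i p_i\vartheta_{x_i}f_\gamma=c\,f_\gamma$ with $c\neq 0$). No further comment is needed.
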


\begin{proof}
For $\Delta$ an atypical face satisfying (ii)  of  Definition \ref{definition:badface}, but not  (i),
we see that
\begin{equation}
 f_\Delta  ( {\rm Sing}\; f_\Delta  \cap (\C^\ast)^{dim\; \Delta}) =\{0\}.
 \label{fDelta}
\end{equation}
In this case, the face $\Delta$ is contained in a $(dim \; \Delta)$ dimensional
linear space that does not pass through the origin. As $f_\Delta$ is a weighted homogeneous polynomial such that
$ f_\Delta = \sum_{i=1}^n w_i {\vartheta_i}f_\Delta$ for a non-zero rational vector $(w_i)_{i=1}^n,$ we have
   \eqref{fDelta}.

   The relations \eqref{CDT} and   \eqref{fDelta} yield \eqref{fgammaKinftynondeg}.
\end{proof}

In combining Corollaries \ref{cor:inclusion}, \ref{cor:inclusionupper },
we determine ${\mathcal K}_\infty (f)$ up to $\{0\}$
under the assumption of Theorem   \ref{theorem1} for $f:$ Newton non-degenerate at infinity.

For $f:$ depending effectively on two variables, M.Ishikawa \cite[Theorem 6.5]{Ishikawa} established a precise description of $\Bi(f)$
where a set essentially larger than the LHS of \eqref{fgammaKinfty} appears. This situation suggests that the superset of
${\mathcal K}_\infty (f)$ can be essentially larger than the RHS of  \eqref{fgammaKinftynondeg}, if $f$ is not non-degenerate at infinity.

Parusi\'nski \cite{Parus} established the equality
$$ {\mathcal K}_\infty (f) \cup f( {\rm Sing}\; f) = \Bi (f)$$
for the case where the projective closure in $\P^n$ of the generic fibre of $f$ has only isolated singularities on the hyperplane at infinity $H_\infty \subset \P^n.$
His main concern was to look at the case for the Newton polyhedron $ \Delta(f_d)$ with full dimension $(= n-1)$
as the polynomial is decomposed  into homogeneous terms
$f(x)=  \sum_{j=0}^d f_j(x) , deg\; f_j=j.$
In this setting, we see
\begin{corollary}
Let $f$ be a polynomial such that
the projective closure in $\P^n$ of the generic fibre of $f$ have only isolated singularities on the hyperplane at infinity $H_\infty \subset \P^n.$
Assume the conditions imposed in Corollary \ref{cor:inclusion}. Then we have
\begin{equation}
 \bigcup_{\gamma} f_\gamma ( {\rm Sing}\; f_\gamma  \cap (\C^\ast)^{dim\;\gamma})\subset  \Bi (f) \subset \bigcup_{\gamma} f_\gamma ( {\rm Sing}\; f_\gamma  \cap (\C^\ast)^{dim\;\gamma}) \cup \{0\}.
\label{fgammaBi}
\end{equation}
\label{cor:parus}
\end{corollary}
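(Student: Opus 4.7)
The plan is to apply Parusi\'nski's theorem $\Bi(f) = {\mathcal K}_\infty(f) \cup f(\Sing f)$, which is available under the isolated-singularity-at-infinity hypothesis recalled just before the statement, and then bound each of the two summands by results established earlier in the paper.

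The lower inclusion of \eqref{fgammaBi} is immediate: Corollary \ref{cor:inclusion} places the left-hand set inside ${\mathcal K}_\infty(f)$, and ${\mathcal K}_\infty(f) \subset \Bi(f)$ follows from Parusi\'nski's equality.

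For the upper inclusion the strategy is to handle ${\mathcal K}_\infty(f)$ with Corollary \ref{cor:inclusionupper } and the finite-distance piece $f(\Sing f)$ by a direct toric argument. Corollary \ref{cor:inclusionupper } requires Newton non-degeneracy at infinity in the sense of \cite{CDT}; this must either be verified from Parusi\'nski's hypothesis that the projective closure of the generic fibre has only isolated singularities on $H_\infty$, or imposed jointly with it. For $x_\ast \in \Sing f \cap (\C^\ast)^n$, the identity \eqref{MT} transforms $\vartheta_x f(x_\ast) = 0$ into $\vartheta_u f^W(u(x_\ast)) = 0$, so the proof of Corollary \ref{cor:inclusion} applied in reverse identifies $f(x_\ast)$ with a critical value of $f^W_\gamma$ for a suitable bad $\gamma$. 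Critical points lying on a coordinate hyperplane produce critical values of $f_\Delta$ on some proper face $\Delta$; those $\Delta$ that fail condition (i) of Definition \ref{definition:badface} contribute only the value $0$ by the weighted-homogeneity identity $f_\Delta = \sum_i w_i \vartheta_i f_\Delta$ used in the proof of Corollary \ref{cor:inclusionupper }, so they are absorbed into the $\{0\}$ summand.

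The hard part will be the bridge between Parusi\'nski's geometric non-singularity hypothesis at infinity and the combinatorial Newton non-degeneracy used in \cite{CDT}: a priori these are distinct conditions, and a clean proof will either require augmenting the corollary's hypotheses to include both, or re-running the bad-face construction of Theorem \ref{theorem1} under the isolated-singularity assumption to verify directly that no non-bad atypical face can contribute an asymptotic critical value other than $0$. A secondary technical point, arising already in the treatment of $f(\Sing f)$, is the careful matching between critical points of the full polynomial and critical points of the face polynomials $f_\gamma$ across the coordinate stratification of $\C^n$.
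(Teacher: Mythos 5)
Your skeleton (Parusi\'nski's equality $\Bi(f) = {\mathcal K}_\infty(f)\cup f(\Sing f)$, Corollary \ref{cor:inclusion} for the lower inclusion, Corollary \ref{cor:inclusionupper } for the ${\mathcal K}_\infty(f)$ part of the upper inclusion) is exactly how the paper reads this corollary; the paper gives no separate proof and presents \eqref{fgammaBi} as an immediate combination of these ingredients. The lower inclusion is fine as you state it. The genuine gap is your ``direct toric argument'' for the finite-distance term $f(\Sing f)$. The identity \eqref{MT} converts $\vartheta_x f(x_\ast)=0$ for $x_\ast\in(\C^\ast)^n$ into $\vartheta_u f^W(u_\ast)=0$ for the \emph{full} Laurent polynomial $f^W$, not for its bad-face part $f^W_\gamma$; the ``reverse'' direction in the proof of Corollary \ref{cor:inclusion} only compares $f_\gamma$ with $f^W_\gamma$, two functions supported on the same face $\gamma$. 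Nothing turns an interior critical value of $f$ into a critical value of a face polynomial: a Morse point of $f$ in $(\C^\ast)^n$ with critical value $c\neq 0$ is in general unrelated to any Newton boundary data, so the step ``the proof of Corollary \ref{cor:inclusion} applied in reverse identifies $f(x_\ast)$ with a critical value of $f^W_\gamma$ for a suitable bad $\gamma$'' fails, as does the companion claim that critical points lying on coordinate hyperplanes give critical values of $f_\Delta$ absorbed into $\{0\}$ (such points are governed by restrictions of $f$ to coordinate subspaces, not by face functions, and the weighted-homogeneity identity applies to $f_\Delta$, not to $f$ near such a point).

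Your two flagged worries are legitimate, but they are not resolved in the paper either, and they cannot be resolved by the bridge you propose. The paper's implicit argument handles only the ${\mathcal K}_\infty(f)$ summand, via \eqref{CDT} and \eqref{fgammaKinftynondeg}, and the Newton non-degeneracy at infinity used there is indeed an extra hypothesis not deducible from Parusi\'nski's isolated-singularity condition; the phrase ``assume the conditions imposed in Corollary \ref{cor:inclusion}'' has to be read as carrying the non-degeneracy assumption of Corollary \ref{cor:inclusionupper } along (as the paper does again in Corollary \ref{cor:Kinftynumber}). What your method actually proves, under that added hypothesis, is the sandwich with $f(\Sing f)$ kept on the right-hand side, i.e. $\bigcup_{\gamma} f_\gamma ( {\rm Sing}\; f_\gamma \cap (\C^\ast)^{dim\;\gamma})\subset \Bi(f)\subset \bigcup_{\gamma} f_\gamma ( {\rm Sing}\; f_\gamma \cap (\C^\ast)^{dim\;\gamma})\cup\{0\}\cup f(\Sing f)$; removing $f(\Sing f)$ requires an additional assumption (for instance $\Sing f=\emptyset$, or $f(\Sing f)$ contained in the bad-face critical values), or restricting the claim to the part of $\Bi(f)$ coming from ${\mathcal K}_\infty(f)$. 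State that explicitly rather than attempting to absorb interior critical values into the bad-face data.
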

Thus, to decide exactly $\Bi (f)$ in this case, it is enough to verify
$\{0\} \in \Bi (f)$ or not.

\vspace{1pc}
Now we consider an affine lattice $\bL_{\gamma}$  (i.e. a principal homogeneous space of a free Abelian group) with rank $dim\;\gamma$
generated by
\begin{equation}
\gamma_\Z \cdot W = \{\alpha.W;  \alpha \in \gamma \cap \Z^n\}
\label{def:gammaZ}
\end{equation}
for $\gamma:$ a bad face and $W$ \eqref{WM}.
We denote by $\bL_{\gamma} \otimes \R$ the real affine space spanned by $\bL_{\gamma}.$
 In $\bL_{\gamma} \otimes \R$, we introduce the volume form $Vol_{\gamma}$
by setting the volume of an elementary simplex with vertices in $\bL_{\gamma}$ equal to 1
(\cite[\S 2 C]{GZKDiscriminant}).
After \cite[Theorem 3A.2]{GZKDiscriminant} the principal $A-$determinant $D_A(f_\gamma^W+a_0)$
of the polynomial
$\sum_{\alpha \in supp(f_\gamma)} a_\alpha u^{\alpha.W} + a_0$
is a homogeneous polynomial of degree $Vol_\gamma( \overline{\gamma_\Z \cdot W} )$
in $(a_\alpha)_{\alpha \in supp f_\gamma}$ and $a_0$
for $ \overline{\gamma_\Z \cdot W}:$  the convex hull of  $\gamma_\Z \cdot W$ and $\{0\}$ in $\bL_{\gamma} \otimes \R.$
Let us define the volume $Vol_\gamma( \overline{\gamma_\Z})$ in an analogous way to $Vol_\gamma( \overline{\gamma_\Z \cdot W})$
in replacing $W$ by the identity matrix $Id_{dim\;\gamma}.$

We remark that $Vol_\gamma( \overline{\gamma_\Z \cdot W})$ equals to $Vol_\gamma( \overline{\gamma_\Z})$ due to the unimodularity of $W.$

Thus, we establish the following evaluation on the cardinality of ${\mathcal K}_\infty (f).$

\begin{corollary}
For $f$ like in Corollary \ref{cor:inclusionupper } (resp. like in \ref{cor:parus}),
the following inequality holds
\begin{equation}
    \# {\mathcal K}_\infty (f) \leq 1+  \sum_{\gamma: bad\; face} Vol_\gamma( \overline{\gamma_\Z}),
\label{fgammaKinftynumber}
\end{equation}
[resp.
\begin{equation}
    \# \Bi (f) \leq 1+  \sum_{\gamma: bad\; face} Vol_\gamma( \overline{\gamma_\Z}). ]
\label{fgammaBinumber}
\end{equation}
\label{cor:Kinftynumber}
\end{corollary}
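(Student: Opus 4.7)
The plan is to combine the inclusion from Corollary \ref{cor:inclusionupper} (respectively Corollary \ref{cor:parus}) with a GKZ-type count of critical values face by face. More precisely, from these results we have
$$
{\mathcal K}_\infty (f) \subset \{0\} \cup \bigcup_{\gamma:\, bad\; face} f_\gamma\bigl({\rm Sing}\; f_\gamma \cap (\C^\ast)^{\dim \gamma}\bigr),
$$
so it is enough to establish, for each bad face $\gamma$, the inequality
$$
\#\, f_\gamma\bigl({\rm Sing}\; f_\gamma \cap (\C^\ast)^{\dim \gamma}\bigr) \;\leq\; Vol_\gamma\bigl(\overline{\gamma_\Z}\bigr),
$$
and to sum over $\gamma$ while picking up the extra $+1$ from the possibly isolated value $0$. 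The bifurcation version \eqref{fgammaBinumber} will then follow verbatim from Corollary \ref{cor:parus}.

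Next, I would pass to a torus chart. As in the proof of Corollary \ref{cor:inclusion}, the map (\ref{xu}) identifies $f_\gamma({\rm Sing}\; f_\gamma \cap (\C^\ast)^n)$ with the set of critical values of the Laurent polynomial $f_\gamma^W(0,u'')$ restricted to $(\C^\ast)^{n-k}$, where $k = \mathrm{codim}\,\gamma$ and $n-k = \dim\gamma$. By Lemma \ref{lemma:polynomial}, this restriction is a genuine polynomial in $u''$, whose monomial support is exactly the lattice set $\gamma_\Z \cdot W$ defined in \eqref{def:gammaZ}.

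Then I apply the principal $A$-determinant of Gelfand--Kapranov--Zelevinsky to the augmented polynomial $f_\gamma^W(0,u'') + a_0$, where $a_0 \in \C$ is a new indeterminate. By the characterizing property of $D_A$, a complex number $c$ is a critical value of $f_\gamma^W(0,\,\cdot\,)$ on the torus $(\C^\ast)^{\dim\gamma}$ if and only if the system $\{f_\gamma^W(0,u'') + a_0 = 0,\ \vartheta_{u''} f_\gamma^W(0,u'') = 0\}$ has a solution in the torus for $a_0 = -c$; equivalently $D_A(f_\gamma^W(0,u'') + a_0)\bigr|_{a_0 = -c} = 0$. By the theorem \cite[Thm.~3A.2]{GZKDiscriminant} already quoted in the text, $D_A$ is a homogeneous polynomial of total degree $Vol_\gamma(\overline{\gamma_\Z \cdot W})$ in the coefficients $(a_\alpha)_{\alpha \in \mathrm{supp}(f_\gamma)}$ together with $a_0$. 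Fixing the original coefficients $(a_\alpha)$ and viewing $D_A$ as a univariate polynomial in $a_0$, its degree is at most the total degree, so it has at most $Vol_\gamma(\overline{\gamma_\Z \cdot W})$ roots. Unimodularity of $W$ gives $Vol_\gamma(\overline{\gamma_\Z \cdot W}) = Vol_\gamma(\overline{\gamma_\Z})$ as already remarked, and the desired face-wise bound follows.

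The main obstacle I anticipate is the delicate point that $D_A(f_\gamma^W + a_0)$, regarded as a polynomial in the single variable $a_0$, should not degenerate to the zero polynomial; otherwise the degree argument is vacuous. This requires the implicit hypothesis that $f_\gamma^W(0,\,\cdot\,)$ has a finite critical value set on $(\C^\ast)^{\dim\gamma}$, which is automatic when the singularities of $f_\gamma^W$ in the torus are isolated but must be checked (via standard GKZ compactness/generic-fiber arguments) in the possibly non-isolated setting covered by Corollary \ref{cor:inclusion}. Once this finiteness is in place, the rest is a routine application of the volume formula for the principal $A$-determinant and of the lattice equality $Vol_\gamma(\overline{\gamma_\Z \cdot W}) = Vol_\gamma(\overline{\gamma_\Z})$.
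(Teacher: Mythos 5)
Your proposal follows essentially the same route as the paper: combine the inclusion of Corollary \ref{cor:inclusionupper } (resp.\ Corollary \ref{cor:parus}) with the GKZ result \cite[Theorem 3A.2]{GZKDiscriminant}, so that the critical values of $f^W_\gamma$ on the torus are among the roots in $a_0$ of the principal $A$-determinant $D_A(f^W_\gamma+a_0)$, whose degree $Vol_\gamma(\overline{\gamma_\Z\cdot W})=Vol_\gamma(\overline{\gamma_\Z})$ (by unimodularity of $W$) gives the face-wise bound, with the extra $+1$ accounting for the value $0$. The only difference is that you explicitly flag the need for the specialized $D_A$ to be nonzero as a polynomial in $a_0$ (and only the forward implication, critical value $\Rightarrow$ root, is actually needed), a point the paper leaves implicit.
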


We remark that the estimation above \eqref{fgammaKinftynumber}
gives a better approximation than \cite[Theorem 2.2, 2.3]{JK0}
under conditions imposed in
Corollary \ref{cor:Kinftynumber} if $dim \;\gamma < n-1.$

\section{Non relatively simple face
\label{sec:nonrelativelysimple}}

In  \cite{Ta}, the notion of relatively simple face has been introduced.

\begin{definition} (\cite[Definition 1.4]{Ta})
A face $\gamma \subset \tilde \Gamma_{-}(f) \cap \Delta(f)$ is called relatively simple,  if $C(\gamma)^\ast$ $\subset$ 
$(\tilde \Gamma_{-}(f))^\ast$ is simplicial or  $dim\;C(\gamma)^\ast \leq 3$ .
\label{def:nonrelativelysimple}
\end{definition}

The main result Theorem 1.6. of \cite{Ta} relies heavily on the notion of relatively simple faces.
It shows that  the set $ \bigcup_{\gamma} f_\gamma  ( {\rm Sing}\; f_\gamma  \cap (\C^\ast)^{dim\; \gamma})$ where $\gamma$ runs relatively simple bad faces   is contained in the bifurcation value set of a polynomial mapping $f$ under the condition of  non-degeneracy and isolated singularities at infinity. We say that $f$ has isolated singularities at inifinity over $ b \in \bigcup_{\gamma} f_\gamma  ( {\rm Sing}\; f_\gamma  \cap (\C^\ast)^{dim\; \gamma})$
if   $(f_\gamma^W)^{-1}(b) \cap (\C^\ast)^{dim\; \gamma} $
has only isolated singular points for every Laurent polynomial $f_\gamma^W$ (\ref{fWu}) constructed on a corresponding bad face $\gamma.$
It is clear that this definition does not depend on the choice of the unimodular matrix $W$ (\ref{WM}) due to the argument in the proof of Corollary \ref{cor:inclusion}.

In this section, we examine an example of  a polynomial in 5 variables with non-relatively simple bad face (see (\ref{4.2}) ).
Even in this situation, we can construct a curve $X(t)$ satisfying \eqref{(I)}, \eqref{(II)} of Definition \ref{JK} approaching the value $f_\gamma  ( {\rm Sing}\; f_\gamma  \cap (\C^\ast)^{dim\; \gamma})$ for $\gamma:$ non-relatively simple bad face.
This gives an example to Corollary \ref{cor:inclusion} that is not covered by \cite{Ta}. 

1)  Let us begin with a simplicial cone in $\R^4$
 generated by three $1$ dimensional cones $C(\bar v_1), C(\bar v_2), C(\bar v_3)$ where $\bar v_1=(3,3,4,2),$ $\bar v_2=(1,3,5,2),$ $\bar v_3=(3,1,4,2), $
 $\bar v_4=(1,1,1,1).$

Each face of the simplicial cone
\begin{equation}
F_{i,j,k} := C\left(\sum_{s_i+s_j + s_k=1} s_i{\bar v_i} + s_j{\bar v_j} + s_k{\bar v_k} \right)
\label{4.1}
\end{equation}
where $\{ i,j,k\} = \{1,2,3,4\} \setminus \{\ell\} $ for $\ell \not = i,j,k,$  is defined as a subset of a plane $ \{ v \in \R^4; \left <A_{i,j,k},v \right>=0 \}.$ The orthogonal vector to each of the faces is given by:
$A_{1,2,3}=(-2, 0, -4, 11),$
$A_{1,3,4}= (-2, 0, 1, 1),$
$A_{1,2,4}= (1, -5, 2, 2),$
$A_{2,3,4}= (1,1,0,-2).$  We choose the direction of the orthogonal vector in such a way that $\left <A_{i,j,k},{\bar v_\ell} \right>>0$
for every quadruple indices $\{i,j,k,\ell \} = \{1,2,3,4\}.$

We shall construct a non-simplicial cone by means of an additional cone $C(\bar v_5)$ that will be built with the aid of the vector $A_{2,3,4}.$ Namely we choose
$\bar v_5 = \bar v_2+ \bar v_3 + \bar v_4-  A_{2,3,4} = (4,4,10,7). $

We shall convince ourselves that the new non-simplicial cone geneated by five  $1$ dimensional cones $C(\bar v_1), C(\bar v_2), C(\bar v_3)$,  $C(\bar v_4), C(\bar v_5)$
is a convex cone with six faces. Here we recall the Definition \ref{def:cone}.  In fact, we calculate the orthogonal vector to each of faces $F_{i,j,k} $ defined in a  manner similar to (\ref{4.1})  for $\{i,j,k\} = \{1,2,3,4,5\} \setminus \{\ell, p\}$ such that  $\{i,j,k, \ell,p\} = \{1,2,3,4,5\}.$
$A_{2,3,5}=(17,29,-24,8),$
$A_{2,4,5}=(2, -1, 1, -2),$
$A_{3,4,5}=(-1,3,2,-4)$ in addition to
$A_{1,2,3},  A_{1,3,4},  A_{1,2,4} $ already known ($F_{2,3,4}$ is not a face of the newly constructed cone any more).    We have again $\left <A_{i,j,k},\bar v_\ell \right>>0, $ $\left <A_{i,j,k},\bar v_p \right>>0$
for every quintuple indices $\{i,j,k, \ell,p\} $ as above and see thus the newly constructed cone is convex.

2) Now we consider a shift of the apex of the cone towards a vector $\bar v_0 \in (\R_{>0})^4 ,$ say $\bar v_0 = (1,2,3,1).$
We denote the face of the shifted cone $B_{i,j,k} =\bar v_0 + F_{i,j,k}$,  $(i,j,k) = (1,2,3), (1,3,4), (1,2,4), (2,3,5), (2,4,5), (3,4,5).$
The face $B_{i,j,k}$ is a subset of a plane
$ \{ v \in \R^4; \left <A_{i,j,k},v \right>=  \left <A_{i,j,k},\bar v_0 \right> \}.$
In "homogenising" the defining equation of a plane containing $B_{i,j,k}$
we get a plane in $\R^5:$
$ H_{i,j,k}=\{ (x,y,z,w,r) \in \R^5; \left <A_{i,j,k},(x,y,z,w) \right>=  \left <A_{i,j,k}, \bar v_0 \right> r \}.$
In this way we get six planes in $\R^5$ passing through the origin and the intersection
$$  {\bar C}=\cap_{\{i,j,k\}} \{ (x,y,z,w,r) \in \R^5; \left <A_{i,j,k},(x,y,z,w) \right>- \left <A_{i,j,k},\bar v_0 \right> r \geq 0 \}  $$
produces a convex cone. By construction, every plane $ H_{i,j,k}$ contains  a $1$ dimensional cone $C( v_0)$ and $\bar C \supset C( v_0)$ where $ v_0=(\bar v_0,1).$

If we  use the choice  done in  1) and $ v_0=(1,2,3,1,1)$,
the vectors $L_{i,j,k}$ orthogonal to the planes $H_{i,j,k}$ are given by
 $ L_{1,2,3} = (-2,0,-4,11,3),  $
 $ L_{1,3,4} = (-2,0,1,1,-2),  $
$ L_{1,2,4} = (1,-5,2,2,1), $
$ L_{2,3,5} = (17,29,-24,8,-11),  $
$ L_{2,4,5} = (2,-1,1,-2,-1),  $
$ L_{3,4,5} = (-1,3,2,-4,-7).  $
We define vectors  $v_i = (\bar v_i, 0), i \in [1,4]$ in $\R^5.$
The vector $L_{i,j,k} \in (\R^5)^\ast$ is orthogonal to  $v_i, v_j, v_k$ in addition to $v_0.$
We shall check that $\left<L_{i,j,k}, v_\ell \right> \geq 0 $ for every $v_\ell,$ $\ell \in [0,5].$
Except 6 triples shown above, this positivity property is not satisfied for
other triples from  $\{1,2,3,4,5\}$.

The following polynomial
\begin{equation}
f=  -3 x^{v_0} +x^{3 v_0}+ x^{v_1+v_0}+ x^{v_2+v_0}+x^{v_3+v_0}+ x^{v_4+v_0}+x^{v_5+v_0}
\label{4.2}
\end{equation}
has a $1-$ dimensional bad face contained in $C(v_0)$ that is not relatively simple in the sense of Definition \ref{def:nonrelativelysimple}.
In fact, the cone $C(v_0)^{\ast} \in (\R^5)^\ast$ in the dual fan $(\widetilde{\Gamma}_-(f))^*$ that corresponds to the cone $C(v_0)$  is a 4 dimensional cone with 6 generators $L_{i,j,k}$ calculated above.
 In the sequel, we shall show the inclusion
\begin{equation}
\{ \pm 2 \}\subset {\mathcal K}_\infty(f) \subset \{0, \pm 2 \}.
\label{4.3}
\end{equation}

3) Now we shall construct a unimodular cone $\sigma \in K$  of the unimodular simplicial subdivision
$(\widetilde{\Gamma}_-(f))^*.$
For example, if we choose $ a_1= \frac{1}{5}(L_{1,3,4}+L_{1,2,4}+2 L_{1,2,3} ), a_2= \frac{1}{15}( L_{1,2,4} + 3 L_{1,2,3}+ 10 L_{2,4,5}), a_3=   L_{1,2,3},    a_4 = L_{2,4,5}, a_5= (1,1,-1,1,0)$,
as the column vectors of
$$  W =
 ({a_1}^T, {a_2}^T,{a_3}^T, a_4^T, a_5^T )=
\left( \begin{array}{c}
w_1  \\
w_2      \\
w_3      \\
w_4 \\
w_5
\end{array} \right) =\left(
\begin{array}{ccccc}
 -1 & 1 & -2 & 2 & 1 \\
 -1 & -1 & 0 & -1 & 1 \\
 -1 & 0 & -4 & 1 & -1 \\
 5 & 1 & 11 & -2 & 1 \\
 1 & 0 & 3 & -1 & 0
\end{array}
\right).$$
they are generators of a unimodular cone $\sigma.$
One shall also verify that $\left<a_5, \alpha\right> > 0$ for all $\alpha \in supp(f).$
As for the method to  obtain unimodular simplicial subdivision of a cone  see \cite{Oka}.

In this situation, the polynomial (\ref{4.2}) will have  the following form

$ f^W(u) ={u_1}^{17} {u_2}^7 {u_3}^{29} {u_5}^6+$ ${u_1}^2 {u_2}^4 {u_4}^5{u_5}^3+{u_1}^2 {u_2} {u_3}^5 {u_5}^3+$
${u_1} {u_5}^2+{u_2}^2 {u_4}^3 {u_5}^5+{u_5}^3-3 {u_5}.$

Consider the expansion (\ref{fWuU})  around the singular point $u^\ast =(0,0,0,1)$ where $\vartheta_u f^W(u^\ast) $ $=$ $0.$
Then we see that $supp_{u^\ast}(\left<\mu_j,f^W \right>) \subset \{v; \left<  q,  v  \right > \geq 5 \} $
for $ j \in [1,4]$ and vector $ q=(5, -20, 3, 15, 5).$
The facet $\Gamma \subset supp_{u^\ast}(\left<\mu_j,f^W \right>) $ treated in Proposition \ref{lemma:C}, i.e. $\Gamma \subset \{v; \left<  q,  v  \right> = 5 \}$ can be found as a convex hull of  points $ (0, 0, 0,$ $ 0,$ $ 1), $  $ (0, 2, 0, 3, 0),$  $ (1, 0, 0, 0, 0),$  $ (2, 1, 5, 0, 0),$
$ (2, 4, 0, 5, 0).$

The relation $W.(q',0) = (-1,0,-2,8,-1)$ for $(q',0)$ $=$ $(5, -20, 3,$ $ 15, 0)$
 shows that the condition $(\mu)$ of Lemma \ref{lemma:transposeM}  is satisfied. From this relation, we see that the index set $\J =\{1,2,4,5 \}$ and  $min_{i \not = j} <(q',0), w_i-w_j> =  <(q',0), w_3-w_4> = -10 = - L_0,$
$\rho = min_{\alpha \in \Delta_{u^\ast}(\left<\mu_j,f^W \right>)  } <q,\alpha> =5. $

We consider a curve $Q(t)$ with real coefficients of length $11= L_0+1$ namely
$$ u_1 = \sum_{j=0}^{10} c_1(j)t^{j+5},  u_2 =  \sum_{j=0}^{10} c_2(j)t^{j-20},$$
$$ u_3 =  \sum_{j=0}^{10} c_3(j)t^{j+3},  u_4 =  \sum_{j=0}^{10}  c_4(j)t^{j+15},  u_5 =  1+ \sum_{j=0}^{10} c_5(j)t^{j+5}.$$

The system of equations (that corresponds to the coefficients of $ t^{5}$ terms)
$$g^1 _ 1 (\textbf{c}) =
 g^2 _ 1 (\textbf{c}) =
  g^4 _ 1 (\textbf{c}) = g^5 _ 1 (\textbf{c}) = 0$$ where

$g^1 _ 1 (\textbf{c}) = 4 c_1(0)^2 c_2(0)^4 c_4(0)^5+2 c_1(0)^2 c_2(0) c_3(0)^5+2 c_1(0)+4 c_2(0)^2 c_4(0)^3+6 c_5(0),$

$g^2 _ 1 (\textbf{c}) =3 c_1(0)^2 c_2(0)^4 c_4(0)^5+3 c_1(0)^2 c_2(0) c_3(0)^5+5 c_1(0)+5 c_2(0)^2 c_4(0)^3+12 c_5(0), $

$g^4_ 1 (\textbf{c}) = 3 c_1(0)^2 c_2(0)^4 c_4(0)^5+2 c_1(0)^2 c_2(0) c_3(0)^5+3 c_1(0)+3 c_2(0)^2 c_4(0)^3+6 c_5(0)$

$g^5_ 1 (\textbf{c}) = c_1(0)^2 c_2(0)^4 c_4(0)^5+c_1(0)^2 c_2(0) c_3(0)^5+c_1(0)+c_2(0)^2 c_4(0)^3+6 c_5(0)$

\noindent
admits non-trivial solutions because each equation effectively depends on ${c_j(0)}_{j \in[1;5]}.$    In a similar manner,
the system of equations (that corresponds to the coefficients of $t^{k+1}$ terms)
$$g^1 _ k (\textbf{c}) =
 g^2 _ k (\textbf{c}) =
  g^4 _ k (\textbf{c}) =  g^5 _ k (\textbf{c}) = 0 $$ for $k\in[2,6]$ also admits non- trivial solutions by virtue of Theorem  \ref{theorem1}.

 In this way, we can find non-trivial solutions for a system of 24 algebraic equations $g^j_k (\textbf{c}) =
  0, \textbf{c} \in {\mathcal C},  j = 1, 2, 4,5, k \in[1, 6].$ This means that we can construct a
curve   $Q(t)$ of parametric length 7 sastisfying the condition (\ref{(O)}) $ -10+ ord \left<\mu_j, \vartheta_u f^W(Q(t)) \right> >0 $ for $ j \in \J =\{1,2,4,5\}$.
The image $X(t)$ of the curve $Q(t)$ by the map
$$  x_1 =u^{ (-1, 1, -2, 2, 1)},x_2= u^{(-1, -1, 0, -1, 1)}, x_3=u^{(-1, 0, -4, 1, -1)}, x_4=u^{(5, 1, 11, -2, 1)}, x_5 = u^{(1, 0, 3, -1, 0)}$$
satisfies \eqref{(I)}, \eqref{(II)} of Definition \ref{JK}
and $lim_{t \rightarrow 0} f(X(t)) = -2 \in {\mathcal K}_\infty(f).$  A similar arguments shows $2 \in {\mathcal K}_\infty(f).$
We see that the polynomial (\ref{4.2}) is non-degenerate at infinity in the sense of \cite{CDT} and its Theorem 1.1 can be applied to it.
There is no contribution in the right hand side superset  in (\ref{CDT}) from "atypical faces" except that from the "strongly atypical face" \cite[Definition 3.2]{CDT} corresponding to the bad face of $\Delta(f)$  for (\ref{4.2}).
Thus, we conclude the inclusion relation (\ref{4.3}).

For the polynomial  (\ref{4.2})  the method of \cite[Theorem 3.5.]{DTT} proposes construction of a curve of parametric length $d^4(d+1)+1=3360001$ with $d= 20 = |v_0+v_1|$ satisfying the required properties.

\section{Examples
\label{sec:examples}}

We shall give an example that illustrates our Theorem \ref{theorem1}.

\begin{example} {\rm (Non-isolated singularity on a two dimensional bad face)

Consider a polynomial $ f(x) =x ^{v_1}+ (x^{v_2} - x ^{v_3}+1)^2 + (x^{v_2} - x ^{v_3}+1)^3 + x^{v_4}-2$ with
$v_1 =(2,1,1), v_2= (2,2,1), v_3 = (1,2,1), v_4 = (3,1,1) . $
This case with non-isolated singularities at infinity has not been treated in \cite{Ta}.

We remark that
$$
M= \left( \begin{array}{c}
v_1  \\
v_2      \\
v_3
\end{array} \right)= ({\mu_1}^T, {\mu_2}^T,{\mu_3}^T)=
\left(
\begin{array}{ccc}
 2 & 1 & 1 \\
 2 & 2 & 1 \\
 1 & 2 & 1
\end{array}
\right)$$
is unimodular. Thus, we can set
$$
M^{-1}= W= ({a_1}^T, {a_2}^T,{a_3}^T)=
\left( \begin{array}{c}
w_1  \\
w_2      \\
w_3
\end{array} \right) =
\left( \begin{array}{ccc}
0& 1 & -1 \\
-1& 1  & 0 \\
2& -3  & 2
\end{array} \right).$$

The only  bad face $\gamma$ of $\Delta(f)$ is located on the plane spanned by $v_2, v_3.$
For the above $W,$ we have
$$f^W(u)  =-2+ u_1+ (u_2 - u_3+1)^2 + (u_2 - u_3+1)^3 + \frac{u_1 u_2}{u_3}. $$
After \eqref{fgammaKinftynumber}, $\# {\mathcal K}_\infty (f) \leq 10$
while $  Vol_\gamma( \overline{\gamma_\Z \cdot W}) =9 $ for $\overline{\gamma_\Z \cdot W}:$
the convex hull of $\{(0,0), (3,0), (0,3)\}.$

The polynomial  $f_\gamma^W(0, u_2, u_3)  =(u_2 - u_3+1)^2 + (u_2 - u_3+1)^3 $ has non-isolated singularities along a line
$u_2 - u_3+1=0.$ We can choose, for example $u^\ast = (0,-1/3,2/3).$ In the neighbourhood of this point the rational function $f^W(u)$
has the expansion
$$ f^W(u)  = -2+ u_1+ (U_2 - U_3)^2 + (U_2 - U_3)^3  +\frac{3u_1}{2} (U_2-1/3)(1- \frac{3 U_3}{2} + ( \frac{3 U_3}{2} )^2 + \cdots) , $$
for $U_2 = u_2 +1/3, U_3= u_3 -2/3.$\\
$\Delta( \left<\mu_i, \vartheta_u f^W(u)\right>)\;\;i=1,2,3$ give rise to the facet $\Gamma $, (\ref{Gamma}).

A direct calculation shows
$$\left<\mu_3, \vartheta_u f^W(u)\right> = \frac{ u_1}{16}-2 U_2 + 2 U_3 + h.o.t.$$

\begin{figure}[ht]										
\begin{center}											
\includegraphics[width=8cm]{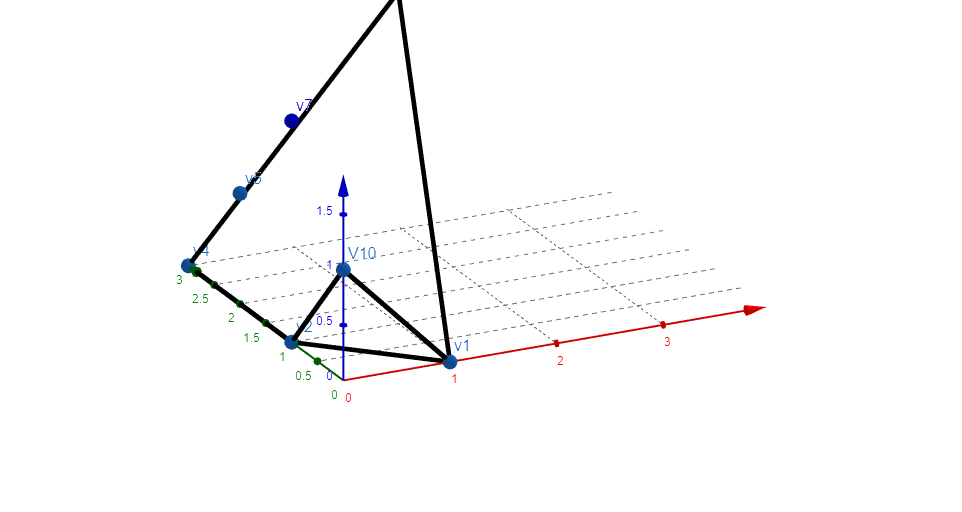} 		
\caption{ The facet $\Gamma$  }
\end{center}			
\label{facetgamma}								
\end{figure}

Thus, we find the facet $\Gamma$ located on the plane containing $(1,0,0), (0,0,1), (0,1,0)$ and $q=(1,1,1),$ $(q',0)=(1,0,0).$

We calculate $L_0 = 3$ and $\rho =1.$ A curve $Q(t)$ (\ref{(Q)'}) with real coefficients of parametric length $4,$ namely

$$ u_1 = \sum_{j=0}^3 c_1(j)t^{j+1}, u_2 = -1/3 + \sum_{j=0}^3 c_2(j)t^{j+1}, u_3 = 2/3 + \sum_{j=0}^3 c_3(j)t^{j+1} $$

that satisfies $$ -3+ord \left<\mu_j, \vartheta_u f^W(Q(t))\right> >0$$ for $j \in \J = \{1, 3\}$ can be constructed.

We remark that after the method of  \cite[Theorem 3.5.]{DTT}, the real curve with required property has parametric length $16\times 15^2 +1=3601 .$

In fact, if we plug these expressions into $\left<\mu_3, \vartheta_u f^W(u)\right>,$ we get an expansion with initial term proportional to $t^1$, ($\left<q, \alpha\right> =1$ for $\alpha \in \Gamma$);

$ \left<\mu_3, \vartheta_u f^W(u)\right> (Q(t)) = \{ c_1(0)/2 - 2 c_2(0) + 2 c_3(0)) \} t +$\\
$ 1/4  \{2 c_1(1) + 6 c_1(0) c_2(0) - 4 c_2(0)^2 - 8 c_2(1) + 3 c_1(0) c_3(0) + 8 c_2(0) c_3(0) -
   4 c_3(0)^2 + 8 c_3(1)\} t^2+$\\
$  1/8 \{ 4 c_1(2) + 12 c_1(1) c_2(0) + 24 c_2(0)^3 + 12 c_1(0) c_2(1) - 16 c_2(0) c_2(1) -
   16 c_2(2) + 6 c_1(1) c_3(0) - 18 c_1(0) c_2(0) c_3(0) - 72 c_2(0)^2 c_3(0) + 16 c_2(1) c_3(0) -
   9 c_1(0) c_3(0)^2 + 72 c_2(0) c_3(0)^2 - 24 c_3(0)^3 + 6 c_1(0) c_3(1) + 16 c_2(0) c_3(1) -
   16 c_3(0) c_3(1) + 16 c_3(2)\}t^3+\cdots$

The  case $\left<\mu_j, \vartheta_u f^W(Q(t))\right>$ also admits a similar expression.
In both cases  $j \in \J = \{1, 3\},$ coefficient of $t$ depends on $(c_1(0), c_2(0), c_3(0)),$ that of $t^2$ depends on $(c_1(0),$ $ c_2(0),$ $c_3(0),$ $c_1(1),$ $c_2(1),$ $ c_3(1)),$ that of $t^3$ depends on $(c_i(j))_{i=1,2,3, j=0,1,2}.$ Thus, the system of algebraic equations imposed on $(c_i(j))_{i=1,2,3, j=0,1,2} \in \C^9$ to make the coefficients of $t, t^2, t^3$ vanish has non-trivial solutions. In fact, this system can be solved in $\R^9.$ As for the construction of a real curve $Q(t),$ i.e. a real curve $X(t),$ see \cite{TGE}.

We can choose as $(c_1(3), c_2(3) , c_3(3)) \in \C^3$ arbitrary non-zero vector.

The image of the curve $Q(t)$ by the map
$$ x_1 = u_2 u_3^{-1}, x_2= u_1^{-1}u_2, x_3 = u_1^2 u_2^{-3} u_3^2 $$
satisfies (\ref{(I)}) ,  (\ref{(II)}) of Definition \ref{JK} and $lim_{t \rightarrow 0} f(X(t)) = -2 \in {\mathcal K}_\infty(f).$

As it can be seen in this example, the curve  $X(t)$ approaches the surface $\{x; f(x) = -2\}$ as $t \rightarrow 0.$

We obtain a curve $X(t) =(x_1(t), x_2(t), x_3(t))$ asymptotically approaching  the surface $\{x ; f(x) =-2\}$
as follows:
$$ x_1(t) = \frac{t^4+t^3+t^2+t-\frac{1}{3}}{t^4+\frac{131 t^3}{256}-\frac{t^2}{4}+\frac{3 t}{4}+\frac{2}{3}}$$
$$x_2(t)=\frac{t^4+t^3+t^2+t-\frac{1}{3}}{t^4+t^3+t^2+t}$$
$$x_3(t)=\frac{\left(t^4+\frac{131 t^3}{256}-\frac{t^2}{4}+\frac{3 t}{4}+\frac{2}{3}\right)^2
   \left(t^4+t^3+t^2+t\right)^2}{\left(t^4+t^3+t^2+t-\frac{1}{3}\right)^3}.$$

\begin{figure}[ht]										
\begin{center}											
\includegraphics[width=8cm]{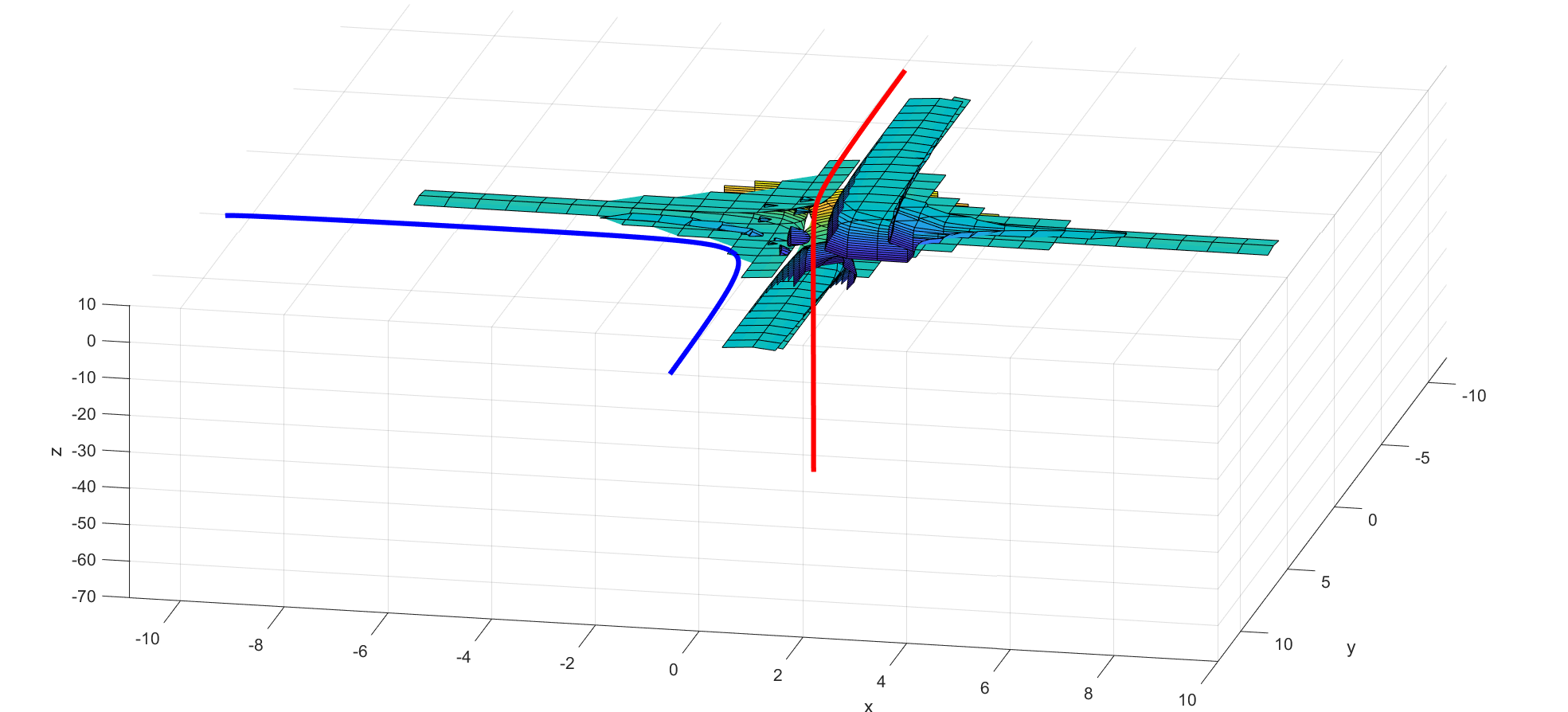}
\caption{ Branches of the curve  $X(t)$  }		
\end{center}		
\label{figure512}										
\end{figure}

 On the Figure 5.2, we see two branches of the curve  that correspond to the asymptotes $ t\rightarrow 0$ from $t>0$ and $t<0$ respectively.
In Example \ref{example41}, the figure illustrating algebraic surface and rational parametric curves are prepared with the aid of the computer programme MATLAB.
}
\label{example41}
\end{example}

\begin{example}
{\rm (Isolated singularities at infinity)
Consider a polynomial $ f(x) =-3 x^{v_0} + x ^{v_1} + x^{v_2} + x ^{3 v_0}$ with
$v_0 =(2,2,1), v_1= (1,0,1), v_2 = (0,1,1). $
$$
W= ({a_1}^T, {a_2}^T,{a_3}^T)=
\left( \begin{array}{c}
w_1  \\
w_2      \\
w_3
\end{array} \right) =
\left( \begin{array}{ccc}
1& 0 & 1 \\
-2& -1  & -1 \\
2& 2  & 1
\end{array} \right).$$

$$
M= \left( \begin{array}{c}
m_1  \\
m_2      \\
m_3
\end{array} \right)= ({\mu_1}^T, {\mu_2}^T,{\mu_3}^T)=
\left(
\begin{array}{ccc}
 -1 & -2 & -1 \\
 0 & 1 & 1 \\
 2 & 2 & 1
\end{array}
\right).$$
The only  bad face $\gamma$ of $\Delta(f)$ is located on the cone $\{t.v_0; t >0 \}.$
$$f^W = {u_1}^3 {u_2}^2 {u_3}^2+{u_2}+{u_3}^3-3 {u_3}.$$
and  $f^W_\gamma (u)  = {u_3}^3-3 {u_3} $ has singular points $u_3^\ast = \pm 1$ and critical values $ \mp 2 $ respectively.
After \cite{Ta} we see that in this case the bifurcation set $\Bi(f) \subset  {\mathcal K}_\infty (f)$ contains $\{\pm 2\}.$

We shall construct a curve $X(t)$ that satisfies (\ref{(I)}) ,  (\ref{(II)}) of Definition \ref{JK} and also the limit condition $  lim_{t\rightarrow 0} f(X(t)) = -2.$ A curve
satisfying $  lim_{t\rightarrow 0} f(X(t)) = 2$ can be also constructed in a  parallel way.

First of all we calculate $\left<\mu_j, \vartheta_u f^W(u)\right>, j =1,2,3$ and find the facet $\Gamma$ as in Proposition \ref{lemma:C}. For example
 $\left<\mu_3, \vartheta_u f^W(u)\right> $ has the following form with $U_3 = u_3-1,$
$$3 {u_1}^3 {u_2}^2 ({U_3}+1)^2-{u_2}+3 {U_3} ({U_3}+1) ({U_3}+2).$$
The facet $\Gamma$ is on the plane containing $(3,2,0), (0,1,0),(0,0,1)$ and $q = (-1,3,3)$ i.e. $(q',0)= (-1,3,0).$
As we see
$$ \left<(q',0), w_1\right> = -1,  \left<(q',0), w_2\right> = -1,  \left<(q',0), w_3\right> = 4,$$
$\J =\{3\}$ and $L_0= max_{i\not = j} \left<(q',0), w_i- w_j\right> = 5.$
The curve (\ref{(Q)'}) has the expansion
$$ u_1 = c_1(0) t^{-1} +  c_1(1) + h.o.t. , u_2 = c_2(0) t^{3} +  c_2(1)t^4 + h.o.t,  u_3 =1+ c_3(0) t^{3} +  c_3(1)t^4 + h.o.t. $$
If we plug these expressions into $\left<\mu_3, \vartheta_u f^W(u)\right>, $ we get an expansion with initial term $t^3$ ($\left<q, \alpha\right> =3$ for $\alpha \in \Gamma$);
$$\{c_2(0) + c_1(0)^3 c_2(0)^2 + 6 c_3(0)\} t^3+
\{ 3 c_1(0)^2 c_1(1) c_2(0)^2 + c_2(1) + 2 c_1(0)^3 c_2(0) c_2(1) $$ $$ + 6 c_3(1)\} t^4+
\{3 c_1(0) c_1(1)^2 c_2(0)^2 + 3 c_1(0)^2 c_1(2) c_2(0)^2 + 6 c_1(0)^2 c_1(1) c_2(0) c_2(1) +$$
$$+ c_1(0)^3 c_2(1)^2 + c_2(2) + 2 c_1(0)^3 c_2(0) c_2(2) + 6 c_3(2) \} t^5 + h.o.t.$$
The coefficient of $t^3$ depends on $(c_1(0), c_2(0),c_3(0))$ that of $t^4$ depends on  $(c_1(0),$ $ c_2(0),$ $ c_3(0),$ $ c_1(1),$ $ c_2(1),$ $c_3(1))$
that of $t^5$ depends on  $(c_1(0), c_2(0), c_1(1), c_2(1), c_1(2), c_2(2),c_2(3) ).$
Thus,  we can construct a curve such that $-5+ ord  \left<\mu_3, \vartheta_u f^W\right>(Q(t)) > 0.$ The minimum parametric length of such a curve $Q(t)$ (\ref{(Q)'}) is $ 4.$ Here coefficients can be chosen to be real. As for the construction of a real curve $Q(t),$ i.e. a real curve $X(t),$ see \cite{TGE}.
We remark that after the method of \cite[Theorem 3.5.]{DTT}, the rational curve  with required properties has length $3601.$

We get the desired curve $X(t)$ as the image of the curve $Q(t)$ by the map

$$  x_1 =u_1 u_3,  x_2= (u_1^2 u_2 u_3)^{-1},  x_3= u_1^2 u_2^2  u_3.$$
We obtain a curve $X(t) =(x_1(t), x_2(t), x_3(t))$ asymptotically approaching the surface $\{x ; f(x) =-2\}$
as follows:
$$ x_1(t) =\left(t^2+t+\frac{1}{t}+1\right) \left(t^6-\frac{8 t^5}{3}-t^4-\frac{t^3}{3}+1\right)$$
$$x_2(t)=\frac{1}{\left(t^2+t+\frac{1}{t}+1\right)^2 \left(t^6-\frac{8 t^5}{3}-t^4-\frac{t^3}{3}+1\right) \left(t^6+t^5+t^4+t^3\right)} $$
$$x_3(t)=\left(t^2+t+\frac{1}{t}+1\right)^2 \left(t^6-\frac{8 t^5}{3}-t^4-\frac{t^3}{3}+1\right) \left(t^6+t^5+t^4+t^3\right)^2.$$

\begin{figure}[ht]										

\begin{center}											
\includegraphics[width=8cm]{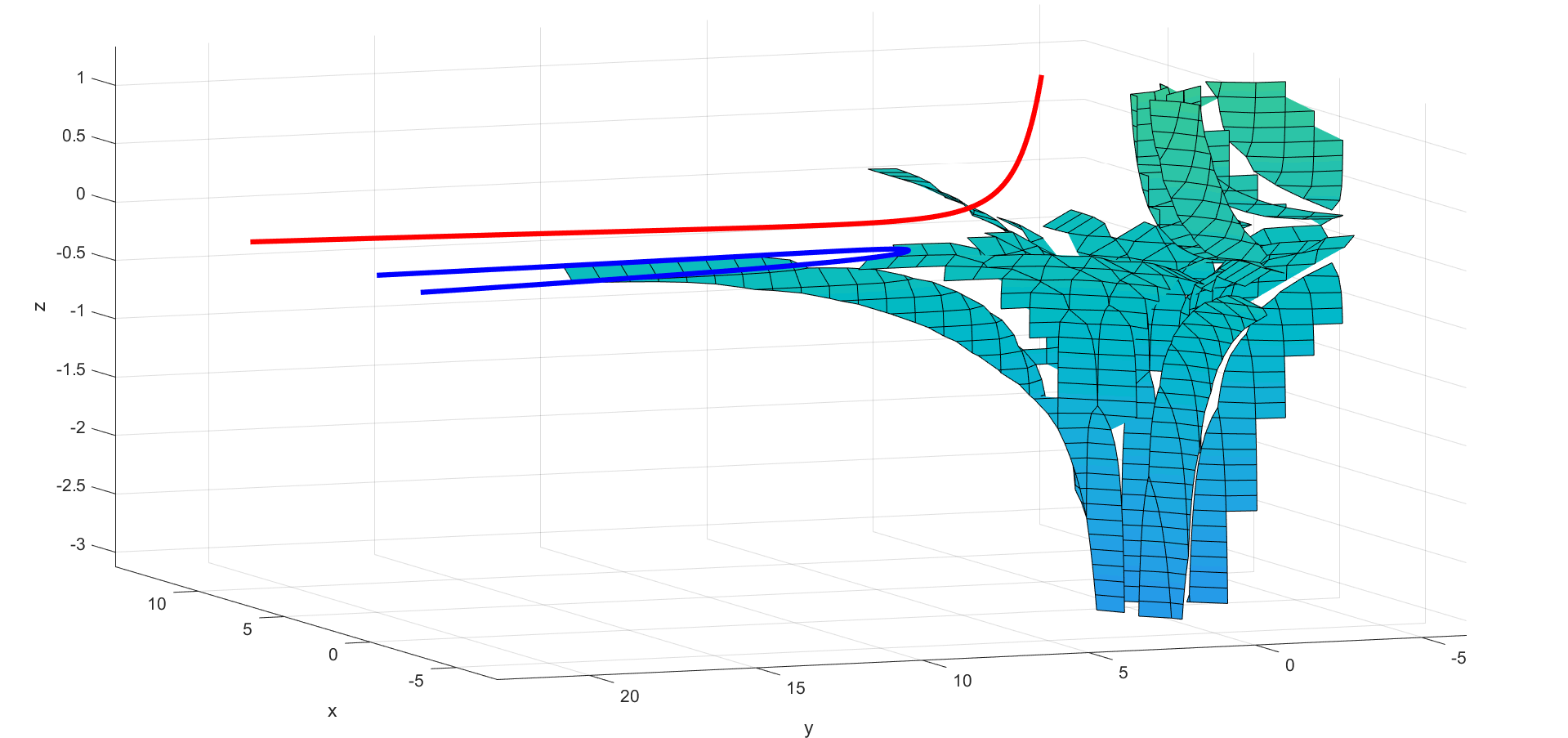} 	
\caption{ Branches of the curve  $X(t)$  }		
\end{center}
\label{figure52}												
\end{figure}
 On the Figure 5.3, we see two branches of the curve  that correspond to the asymptotes $ t\rightarrow 0$ from $t>0$ and $t<0$ respectively.
}
\label{example42}
\end{example}

\begin{example}
{\rm($A_n$ singularity on a $1-$dimensional  bad face )
More generally consider a polynomial $$ f(x) =  \prod_{j=1}^q ( x^{v_0}-z_j)^{m_j} + b_1 x ^{v_1} + b_2 x^{v_2}+ b_3 x^{v_3} - \prod_{j=1}^q ( -z_j)^{m_j}$$
with $b_1 b_2 b_3  \prod_{j=1}^{q}z_j \not = 0,\; m_1\geq 2,$ and
$v_0 =(2,2,1), v_1= (0,1,1), v_2 = (1,0,1),v_3=(1,1,3) $. We can choose $ a_1\bot span \{v_0,v_1\} \cap \Delta(f), a_2\bot span \{v_0,v_2 \}\cap \Delta(f) ,$ $a_3\bot span \{v_1,v_2 \}\cap \Delta(f)$ and get $a_1=(1,-2,2), a_2=(-2,1,2), a_3=(1,1,-1)$.

 We take a simplicial unimodular subdivison  of the cone generated by $a_1, a_2, a_3$ as $a_1^{(1)}=\frac{a_2+2a_1}{3}=(0,-1,2), \;a_1^{(2)}=\frac{a_2+a_1^{(1)}}{2} =(-1,0,2), a_3^{(1)}=(1,-1,1)$ (see \cite{Oka})

$$
W= ({a_1^{(2)}}^T, {a_1^{(1)}}^T,{a_3^{(1)}}^T)=
\left( \begin{array}{c}
w_1  \\
w_2      \\
w_3
\end{array} \right) =
\left( \begin{array}{ccc}
-1& 0 & 1 \\
0& -1  & -1 \\
2& 2  & 1
\end{array} \right)$$

$$
M= \left( \begin{array}{c}
m_1  \\
m_2      \\
m_3
\end{array} \right)= ({\mu_1}^T, {\mu_2}^T,{\mu_3}^T)=
\left(
\begin{array}{ccc}
 1 & 2 & 1 \\
 -2 & -3 &- 1 \\
 2 & 2 & 1
\end{array}
\right).$$

The only  bad face $\gamma$ of $\Delta(f)$ is located on the cone $C(v_0)= \{t.v_0; t >0 \}.$\\
 We assume $z_1\cdot z_2\ldots \cdot z_q\neq0,\;\;z_i\neq z_j,\;\;\forall i,j \in [1:q].$
$$f^W(u) = \prod_{j=1}^{q}(u_3-z_j)^{m_j} +b_1 {u_1}^2 {u_2}+b_2{u_1}{u_2}^2{u_3}^2 +b_3{u_1}^5{u_2}^5{u_3}^3 - \prod_{j=1}^{q}(-z_j)^{m_j}$$
and  $f^W_\gamma (u)  = \prod_{j=1}^{q}(u_3-z_j)^{m_j} - \prod_{j=1}^{q}(-z_j)^{m_j} $ has a singular point $u_3^\ast =z_1 \neq 0$ with the critical value $ - \prod_{j=1}^{q}(-z_j)^{m_j}.$  We shall construct a curve $X(t)$ that satisfies (\ref{(I)}) ,  (\ref{(II)}) of Definition \ref{JK} and $  lim_{t\rightarrow 0} f(X(t)) = -\prod_{j=1}^{q}(-z_j)^{m_j}.$
For $(b_1,b_2,b_3) \in \C^3$ so that $f$ be non-degenerate at infinity, we conclude $$ \{ -\prod_{j=1}^{q}(-z_j)^{m_j} \} \subset {\mathcal K}_\infty(f) \subset \{0 , -\prod_{j=1}^{q}(-z_j)^{m_j} \}$$ thanks to Corollary \ref{cor:inclusionupper }.

First of all, we calculate $\left<\mu_3, \vartheta_u f^W(u)\right>$ and find the facet $\Gamma$ as in Proposition \ref{lemma:C}. For example,
 $\left<\mu_3, \vartheta_u f^W(u)\right> $ has the following form with $U_3 = u_3-z_1:$

$  b_1 {u_1}^2 {u_2}+b_2{u_1}{u_2}^2 (U_3+z_1)^2 +(U_3+z_1)[m_1 U_3^{m_1-1}\prod_{k=2}^{q}(U_3+z_1-z_k)^{m_k}+$ \\
$m_1 U_3^{m_1}(U_3+z_1-z_2)^{m_2-1}\prod_{k=3}^{q}(U_3+z_1-z_k)^{m_k}+...]+3b_3{u_1}^5{u_2}^5{u_3}^3$.\\
We see that Newton polyhedra $\Delta( \left<\mu_1, \vartheta_u f^W(u)\right>), \Delta( \left<\mu_2, \vartheta_u f^W(u)\right>)$ are located in the Newton polyhedron $\Delta( \left<\mu_3, \vartheta_u f^W(u)\right>).$

The facet $\Gamma$ is on the plane containing $(1,2,0),(2,1,0),(0,0,m_1-1)$ and $q = (m_1-1,m_1-1,3),$ i.e. $(q',0)= (m_1-1,m_1-1,0).$

We see that
$\J =\{3\},$ $L_0$ $=$  $max_{i\not = j} \left<(q',0), w_i- w_j\right>$ $=$ $5m_1-5$ and $ \rho=3m_1-3. $
The curve (\ref{(Q)'}) has the expansion\\
$ u_1 = c_1(0) t^{m_1-1} +  c_1(1)t^{m_1} +c_1(2)t^{m_1+1}+ h.o.t. ,$\\
$ u_2 = c_2(0) t^{m_1-1} +  c_2(1)t^{m_1} +  c_2(2)t^{m_1+1}+ h.o.t, $\\
$ u_3 = z_1+ c_3(0) t^{3} +  c_3(1)t^4 +  c_3(2)t^5+ h.o.t. $\\
If we plug these expressions into $\left<\mu_3, \vartheta_u f^W(u)\right>, $ we get an expansion with initial term $t^{3m_1-3}$ ($\left<q, \alpha\right> =3m_1-3$ for $\alpha \in \Gamma$)
$$\{ b_1 (c_1(0))^2 c_2(0)+b_2z_1^2c_1(0)(c_2(0))^2+z_1m_1(c_3(0))^{m_1-1}\prod_{k=2}^{q}(z_1-z_k)^{m_k}  \}t^{3m_1-3}+$$

$\{ b_1[ (c_1(0))^2 c_2(1)+2c_1(1)c_1(0)c_2(0)]+b_2z_1^2[2c_1(0)c_2(1)c_2(0)+c_1(1)(c_2(0))^2] +$\\
$z_1 m_1(m_1-1)c_3(1)(c_3(0))^{m_1-2}\prod_{k=2}^{q}(z_1-z_k)^{m_k}\}  t^{3m_1-2}+$\\
$ \{ b_1[ (c_1(0))^2 c_2(2)+2 c_1(1)c_1(0)c_2(1)+2c_2(0)c_1(2)c_1(0)+(c_1(1))^2c_1(0)c_2(0)]$\\
$+b_2z_1^2[2c_1(0)c_2(2)c_2(0)+2c_1(1)c_2(1)c_2(0)+c_1(2)(c_2(0))^2 +c_1(0)(c_2(1))^2c_2(0) ]$\\
$ + z_1 m_1[\frac{(m_1-1)(m_1-2)}{2}(c_3(1))^2 (c_3(0))^{m_1-3}+
(m_1-1)c_3(2)(c_3(0))^{m_1-2}]\prod_{k=2}^{q}(z_1-z_k)^{m_k}\} t^{3m_1-1}+ h.o.t.$

The coefficient of $t^{3m_1-3}$ depends on $(c_1(0), c_2(0),c_3(0))$ that of $t^{3m_1-2}$ depends on\\
  $(c_1(0), c_2(0),c_3(0), c_1(1),c_2(1),c_3(1))$ that of $t^{3m_1-1}$ depends on \\
   $(c_1(0), c_2(0),c_3(0), c_1(1),c_2(1),c_3(1),c_1(2),c_2(2),c_3(2)).$\\

Thus, we can construct a curve such that $5-5m_1+ ord  \left<\mu_3, \vartheta_u f^W\right>(Q(t)) > 0.$ The minimum parametric length of such a curve $Q(t)$ is $ 2m_1.$
We remark that after the method of  \cite[Theorem 3.5.]{DTT}, the rational curve  with required properties has parametric length $ (1+5\cdot\sum_{j=1}^{q}m_j) (5\cdot\sum_{j=1}^{q}m_j)^2 +1.$

We get the desired curve $X(t)$ as the image of the curve $Q(t)$ by the map

$$  x_1 =u_1^{-1} u_3,  x_2= (u_2 u_3)^{-1},  x_3= u_1^2 u_2^2  u_3.$$

Similarly, as it can be seen in this example for case $2$, the $X(t)$ curve asymptotically approaches the surface $\{ x ; f(x)=0 \}$  as $t \rightarrow0$.

We illustrate below this case with an example $q=2,$ $(z_1,z_2) = (1,2),$
 $ f(x) = ( x^{v_0}-1)^3( x^{v_0}-2 ) + x ^{v_1} +  x^{v_2}+  x^{v_3}.$

For this example, we obtain a curve $X(t) =(x_1(t), x_2(t), x_3(t))$ asymptotically approaching the surface $\{x ; f(x) =0\}$
as follows
$$ x_1(t)= \frac{t^8+\frac{57427 t^7}{20736}+\frac{1645 t^6}{864}+\frac{203 t^5}{144}+\frac{13 t^4}{12}+\frac{t^3}{2}+1}{t^7+t^6+t^5+t^4+t^3+\frac{t^2}{2}}$$
$$x_2(t)=\frac{1}{\left(t^7+t^6+t^5+t^4+t^3+t^2\right) \left(t^8+\frac{57427 t^7}{20736}+\frac{1645 t^6}{864}+\frac{203 t^5}{144}+\frac{13 t^4}{12}+\frac{t^3}{2}+1\right)}$$
{\tiny
$$x_3(t)=\left(t^7+t^6+t^5+t^4+t^3+\frac{t^2}{2}\right)^2 \left(t^7+t^6+t^5+t^4+t^3+t^2\right)^2 \left(t^8+\frac{57427 t^7}{20736}+\frac{1645 t^6}{864}+\frac{203 t^5}{144}+\frac{13
   t^4}{12}+\frac{t^3}{2}+1\right).$$}

\begin{figure}[ht]										
\begin{center}											
\includegraphics[width=8cm]{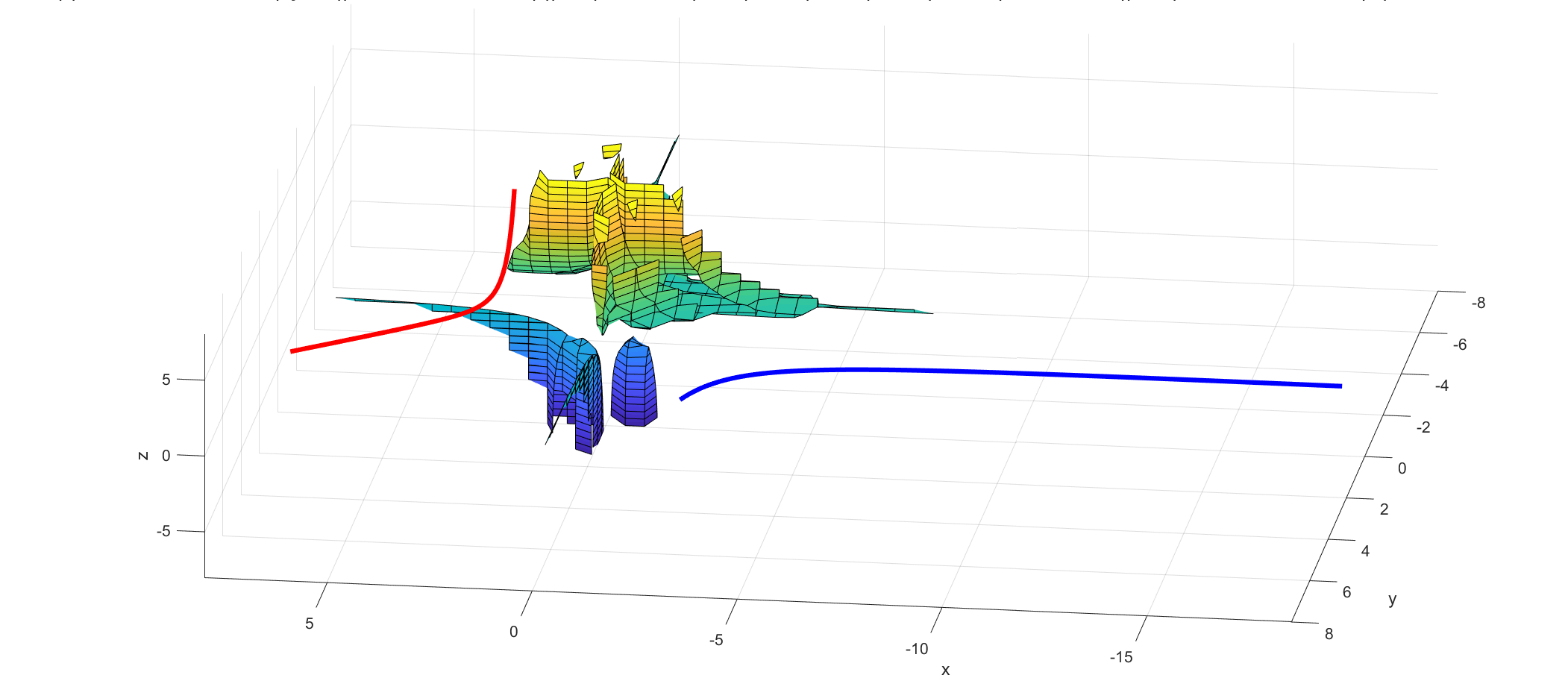} 	
\caption{ Branches of the curve  $X(t)$  }		
\end{center}					
\label{figure532}										
\end{figure}

 On the Figure 5.4,  we see two branches of the curve  that correspond to the asymptotes $ t\rightarrow 0$ from $t>0$ and $t<0$ respectively.

}
\label{example43}
\end{example}


\begin{minipage}[t]{12.2cm}
\begin{flushleft}
{\footnotesize
Susumu TANAB\'E (corresponding author)\\
Department of Mathematics,\\
Galatasaray University,\\
\c{C}{\i}ra$\rm\breve{g}$an cad. 36,\\
Be\c{s}ikta\c{s}, Istanbul, 34357, Turkey.\\
{\it E-mails}:   {tanabe@gsu.edu.tr, tanabesusumu@hotmail.com}\\
ORCID id:  0000-0003-0489-2838  \\

\vspace{0.5cm}
Abuzer G\"UND\"UZ \\
Department of Mathematics,\\
Sakarya University,\\
54050 Sakarya, Turkey.\\
{\it E-mails}:   {abuzergunduz@sakarya.edu.tr, abuzer78@gmail.com}
}\\
ORCID id:   0000-0003-3351-2443 \\
\end{flushleft}
\end{minipage}

\end{document}